\newtheorem{thm}{Theorem}
\newtheorem{dfn}[thm]{Definition}
\newtheorem{lem}[thm]{Lemma}
\newtheorem{exm}[thm]{Example}
\newtheorem{prop}[thm]{Proposition}
\newtheorem{rem}[thm]{Remark}
\newtheorem{cor}[thm]{Corollary}
\newcommand{\dN}{\mathds{N}}
\newcommand{\dR}{\mathds{R}}
\newcommand{\alg}{\mathsf{alg}}
\newcommand{\cone}{\mathsf{cone}}
\newcommand{\ch}{\mathsf{char}}
\author{Konrad Schm\"udgen}
\address{University of Leipzig, Mathematical Institute, Augustusplatz 10/11, D-04109 Leipzig, Germany}
\email{\tt schmuedgen@math.uni-leipzig.de}
\date{}
\begin{document}

\begin{title}{On the Archimedean Positivstellensatz in Real Algebraic Geometry}
\end{title}
\date{\today}

\begin{abstract} A variant of the Archimedean Positivstellensatz is proved which is based on Archimedean semirings or quadratic modules of generating subalgebras. It allows one to obtain representations of strictly positive polynomials on compact semi-algebraic sets by means of smaller sets of squares or polynomials. A large number of examples is developed in detail.
\end{abstract}
\maketitle

\textbf{AMS  Subject  Classification (2020)}. 13J30 (Primary); 12D15, 14P10.

\textbf{Key  words:} Positivstellensatz, positive polynomial, semi-algebraic set, quadratic module, semiring, moment problem

\section{Introduction} 
Archimedean Positivstellens\"atze are important tools of real algebraic geometry to describe  positive polynomials on compact semi-algebraic sets in terms of weighted sums of squares or as elements of semirings (see e.g \cite[Section 7.4]{marshall} or \cite[Section 12.4]{sch17}). In many situations there are finer versions of these theorems for which smaller sets of squares or elements are sufficient.
Some results in this direction have been obtained in \cite[Sections 4 and 6]{kss}. The aim of this paper is to provide a variant of a general Archimedean Positivstellensatz of this kind and to treat a number of illustrating examples.

To formulate the result we  need some terminology and facts from real algebra  \cite{marshall}, \cite{PD}. Throughout this paper,
$A$ is  a {\bf commutative unital real algebra} and $A_1,\dots,A_m$, $m\in \dN$, are subalgebras of $A$ which contain the unit element $1$ of $A$. For $\alpha\in \dR$, we  write $\alpha$ for  $\alpha\cdot 1$. The subalgebra of $A$ generated by $A_1,\dots,A_n$, $n\leq m,$, is denoted by $\alg(A_1,\dots,A_n)$.

By  a {\it cone} in $A$ we mean a subset $C$ of $A$ such that $\lambda c\in C$ and $c+d\in C$ for all $c,d\in C$ and $\lambda\geq 0$. A cone $C$ in $A$ is called {\it unital} if $1\in C$.

If $C_j$ is a unital cone of  $A_j$ for $j=1,\dots,n$, then $\cone(C_1,\dots,C_n)$ denotes the unital cone in  $\alg(A_1,\dots,A_n)$ which is generated by $C_1\cdot C_2\cdots C_n$. That is,  $\cone(C_1,\dots,C_n)$ is the set of elements
\begin{align*}
\sum_{k=1}^r\,  c_{1k}\cdots c_{nk},~~\textrm{ where}~~ c_{jk}\in C_j,\, j=1,\dots,n,\, k=1,\dots,r,\, r\in \dN.
\end{align*}
We recall the following standard notions of real algebraic geometry.
\begin{dfn}
  A unital cone $C$ in $A$ is called a\\
  $\bullet$\, \emph{quadratic module} of\, $A$ if\, $a^2b \in C$  for all $b\in C$ and $a\in A.$\\
  $\bullet$\, \emph{semiring} (or a \emph{preprime}) of\, $A$ if\, $C\cdot C\subseteq C$.\\
  $\bullet$\, \emph{preordering} of $A$ if\, $C$ is a quadratic module and a semiring.
\end{dfn}

 The bounded part of $A$ with respect to a unital cone $C$   is defined by
\begin{align*}
A_{\rm bd}(C):=\{ a\in A:~ \textrm{there exists}~\lambda> 0~~\textrm{such that}~ ~(\lambda-a) \in C~~ \textrm{and} ~~ ( \lambda+ a)\in C\}.
\end{align*}
A unital cone $C$ of  $A$  is called {\it Archimedean} if $A=A_{\rm bd}(C)$. If $C$ is a quadratic module or a semiring of $A$, then $A_{\rm bd}(C)$ is a subalgebra of $A$. Thus, in this case   $C$ is Archimedean if a set of generators of the algebra $A$ belongs to $A_{\rm bd}(C)$.

For a unital cone $C$  of $A$, we define 
\begin{equation*}
    C^\dagger := \{ a\in A:~a+\epsilon \in C ~ \textup{ for all }~  \epsilon \in (0,\infty)\, \} .
\end{equation*}
Obviously, $C^\dagger$ is again a unital cone of $A$ and $C\subseteq C^\dagger$. Such cones were introduced in \cite{kumarshall} and studied extensively in \cite{kss}.

A {\it character} of $A$ is a unital algebra homomorphism of $A$ into $\dR$. The set of all characters of $A$ is denoted by $\ch(A)$. For a subset $C$ of $A$, let
\begin{align*}
\ch(A;C) :=\{ \chi \in \ch(A):\, \chi(c)\geq 0~ \textrm{for all}~ c\in C\, \}.
\end{align*}

A unital cone $M$ of $A$ is called a {\it $C$-module} for a semiring $C$ of $A$ if $ac\in M$ for all $a\in M$ and $c\in C$. Since  $1\in M$, we then have $C\subseteq M$. \smallskip

The following  theorem is the main result of this paper.
\begin{thm}\label{positivst}
Suppose that the  algebra $A$ is generated by the subalgebras $A_1,\dots,A_m$ of $A$. We assume  that   $C_i$ is an Archimedean semiring or  an Archimedean quadratic module of the algebra $A_i$ for $i=1,\dots,m$. Let $M$ be a unital cone of $A$. Then for any element $a\in A$ the following are equivalent:
\begin{itemize}
\item[(i) ]~ $\chi(a)>0$ for   all  $\chi\in \ch(A;M)$ and  $\chi\in \ch(A;C_i)$,  $i=1,\dots,m$.
\item[(ii)] ~ There exists a number $\varepsilon>0$ such that $a\in \varepsilon +\cone(M,C_1,\dots,C_m).$
\end{itemize}
\end{thm}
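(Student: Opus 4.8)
The implication (ii)$\Rightarrow$(i) is routine. Put $N:=\cone(M,C_1,\dots,C_m)$. Specialising all but one factor to the unit in a generic summand $c_{0k}c_{1k}\cdots c_{mk}$ of an element of $N$ (with $c_{0k}\in M$, $c_{ik}\in C_i$) shows that a character is nonnegative on $N$ precisely when it is nonnegative on $M$ and on each $C_i$; hence $\ch(A;N)=\ch(A;M)\cap\bigcap_{i=1}^m\ch(A;C_i)$, and (i) says exactly that $\chi(a)>0$ for all $\chi\in\ch(A;N)$. If $a=\varepsilon+n$ with $n\in N$, then $\chi(a)=\varepsilon+\chi(n)\ge\varepsilon>0$ for every such $\chi$.

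For (i)$\Rightarrow$(ii) the first thing I would prove is that $\cone(C_1,\dots,C_m)$ — and hence the larger cones $N$, $N^+:=\cone(M,C_1^\dagger,\dots,C_m^\dagger)$ and $T:=\cone(C_1^\dagger,\dots,C_m^\dagger)$ used below — is Archimedean in $A$. Indeed $\cone(C_{i_1},\dots,C_{i_s})\subseteq\cone(C_1,\dots,C_m)$ for every index tuple, so an induction on $s$ based on the identity $(\lambda-u)(\mu+v)+(\lambda+u)(\mu-v)=2(\lambda\mu-uv)$ shows that each product of elements drawn from the subalgebras $A_i$ lies in $A_{\rm bd}(\cone(C_1,\dots,C_m))$ (group the factors by subalgebra, then merge the groups one at a time, each step multiplying an element already bounded with respect to some $\cone(C_{i_1},\dots,C_{i_s})$ by an element bounded with respect to some $C_j$); since this bounded part is a subspace containing a spanning set of $A$, it equals $A$. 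Consequently $\ch(A;N)$ is compact for the topology of pointwise convergence (closed in $\dR^A$, with each evaluation bounded on it), so by (i) and continuity $\delta:=\min\{\chi(a):\chi\in\ch(A;N)\}>0$ (the case $\ch(A;N)=\varnothing$ being degenerate, with $N=A$ and (ii) trivial).

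The decisive step would be to replace each $C_i$ by $C_i^\dagger$. By the classical Archimedean Positivstellensatz, $C_i^\dagger=\{b\in A_i:\chi(b)\ge0\text{ for all }\chi\in\ch(A_i;C_i)\}$; this cone is product-closed (a preordering when $C_i$ is a quadratic module, a semiring when $C_i$ is a semiring — the latter by a direct check using that $C_i$ is Archimedean), still Archimedean, and satisfies $\ch(A_i;C_i^\dagger)=\ch(A_i;C_i)$. Therefore $T$ is an Archimedean \emph{semiring} of $A$, $N^+$ is an Archimedean $T$-module, and $\ch(A;N^+)=\ch(A;M)\cap\bigcap_i\ch(A;C_i^\dagger)=\ch(A;N)$. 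Applying the abstract Archimedean Positivstellensatz for modules over an Archimedean semiring (which holds because the extreme states of such a module are characters, by the $T$-module structure; see \cite{marshall,sch17}) to $N^+$ and to $a-\tfrac\delta2$ — which is $\ge\tfrac\delta2>0$ on $\ch(A;N^+)$ — yields $a-\tfrac\delta2\in N^+$, say $a-\tfrac\delta2=\sum_k m_k\,c_{1k}^\dagger\cdots c_{mk}^\dagger$ with $m_k\in M$ and $c_{ik}^\dagger\in C_i^\dagger$.

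It then remains to ``round'' this representation down into $N$. For $\epsilon>0$ one has $c_{ik}^\dagger+\epsilon\in C_i$, hence $b_\epsilon:=\sum_k m_k\prod_i(c_{ik}^\dagger+\epsilon)\in\cone(M,C_1,\dots,C_m)=N$; and $E_\epsilon:=b_\epsilon-(a-\tfrac\delta2)$ is a polynomial in $\epsilon$ with vanishing constant term and coefficients in $A=A_{\rm bd}(N)$, so its order-unit seminorm $\|E_\epsilon\|_N$ tends to $0$ as $\epsilon\to0^+$. Choosing $\epsilon$ with $\|E_\epsilon\|_N<\tfrac\delta4$ gives $\tfrac\delta4-E_\epsilon\in N$, whence
\[
 a-\tfrac\delta4=(a-\tfrac\delta2)+\tfrac\delta4=\bigl(b_\epsilon-E_\epsilon\bigr)+\tfrac\delta4=b_\epsilon+\bigl(\tfrac\delta4-E_\epsilon\bigr)\in N,
\]
so (ii) holds with $\varepsilon=\tfrac\delta4$. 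I expect the main difficulties to be twofold: the Archimedean bookkeeping for $\cone(C_1,\dots,C_m)$, $N$, $T$ and $N^+$, and securing the abstract Positivstellensatz for modules over an Archimedean semiring in precisely the form used here. The conceptual heart of the argument is the detour through the saturated cones $C_i^\dagger$ — which the classical theorem upgrades to preorderings or semirings — followed by the rounding step, since this is what forces the final representation to land in the small cone $N$ rather than merely in $N^+$.
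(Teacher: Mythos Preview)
Your argument is correct and takes a genuinely different route from the paper's. The paper first forms $\cone(C_1,\dots,C_m)$ and then proves that its $\dagger$-closure is an Archimedean preordering of $A$ (Theorem~\ref{daggerpre}), via a chain of lemmas whose hardest step (Lemma~\ref{squares}) builds an auxiliary polynomial algebra and applies Krivine's theorem there to force every square of $A$ into $\cone(C_1,\dots,C_m)^\dagger$; it then applies Proposition~\ref{possemiring} to the $\cone(C_1,\dots,C_m)^\dagger$-module $\cone(M,\cone(C_1,\dots,C_m)^\dagger)$ and rounds back into $N$ by adding a single $\delta$ to each $\dagger$-factor. You invert the order of operations: pass to $C_i^\dagger$ inside each $A_i$ first---where the one-algebra Archimedean Positivstellensatz already makes $C_i^\dagger$ a preordering---and then observe that $T=\cone(C_1^\dagger,\dots,C_m^\dagger)$ is automatically a semiring because each factor is product-closed, so that $N^+$ is a $T$-module with no further work. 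This bypasses the auxiliary-algebra construction entirely and is conceptually cleaner for the purpose of Theorem~\ref{positivst}. The price is twofold: your detour does not deliver Theorem~\ref{daggerpre} as a byproduct (your $T$ sits inside $\cone(C_1,\dots,C_m)^\dagger$ but need not equal it), and your rounding step is a little heavier, expanding $\prod_i(c_{ik}^\dagger+\epsilon)$ as a polynomial in $\epsilon$ and bounding each coefficient, whereas the paper's placement of the $\dagger$ on the outside lets it round with one $+\delta$ per summand. One small slip: in the degenerate case $\ch(A;N)=\varnothing$ you assert $N=A$, which does not follow for a bare Archimedean unital cone; this is harmless, however, since you may then take any $\delta>0$ and run the rest of your argument verbatim.
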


We restate the result in the important special case $M=\dR_+\cdot 1$  separately as
\begin{cor}\label{casem}
Retain the assumptions of Theorem \ref{positivst} and let $a\in A$. The the following two statements are equivalent:
\begin{itemize}
\item[(i) ]~ $\chi(a)>0$ for   all    $\chi\in \ch(A;C_i)$ and  $i=1,\dots,m$.
\item[(ii)] ~ There is a number $\varepsilon>0$ such that $a\in \varepsilon +\cone(C_1,\dots,C_m).$
\end{itemize}
\end{cor}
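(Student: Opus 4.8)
The plan is to obtain Corollary \ref{casem} at once as the special case $M=\dR_+\cdot 1$ of Theorem \ref{positivst}; essentially nothing has to be proved beyond two elementary bookkeeping observations.

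First I would note that $M:=\dR_+\cdot 1$ is a unital cone of $A$ — it is closed under addition and under multiplication by nonnegative reals and it contains $1$ — so, together with the standing hypotheses that $A_1,\dots,A_m$ generate $A$ and that each $C_i$ is an Archimedean semiring or Archimedean quadratic module of $A_i$, all assumptions of Theorem \ref{positivst} are met. It then remains to check that $M$ is invisible on both sides of the resulting equivalence. On the character side, every $\chi\in\ch(A)$ satisfies $\chi(\lambda\cdot 1)=\lambda\,\chi(1)=\lambda\geq 0$ for all $\lambda\geq 0$, hence $\ch(A;\dR_+\cdot 1)=\ch(A)$, so the clause involving $\ch(A;M)$ in part (i) of Theorem \ref{positivst} is vacuous and that part reduces verbatim to part (i) of Corollary \ref{casem}. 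On the cone side, a generator of $\cone(M,C_1,\dots,C_m)$ has the form $(\lambda\cdot 1)\,c_1\cdots c_m=(\lambda c_1)\,c_2\cdots c_m$ with $\lambda\geq 0$ and $c_i\in C_i$; as $C_1$ is a cone, $\lambda c_1\in C_1$, so this element already lies in $\cone(C_1,\dots,C_m)$, and conversely $c_1\cdots c_m=1\cdot c_1\cdots c_m$ with $1\in M$. Hence $\cone(M,C_1,\dots,C_m)=\cone(C_1,\dots,C_m)$, and part (ii) of Theorem \ref{positivst} reduces to part (ii) of Corollary \ref{casem}. Applying Theorem \ref{positivst} then gives the assertion.

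There is essentially no obstacle here; the only point requiring care is that the clause ``$\chi(a)>0$ for all $\chi\in\ch(A;M)$ and $\chi\in\ch(A;C_i)$, $i=1,\dots,m$'' in Theorem \ref{positivst} must be read as a conjunction of nonnegativity constraints — equivalently, as a statement about $\ch(A;M)\cap\bigcap_{i=1}^m\ch(A;C_i)$ — so that deleting the vacuous constraint coming from $M=\dR_+\cdot 1$ is legitimate.
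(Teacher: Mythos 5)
Your proposal is correct and follows exactly the paper's own argument: specialize Theorem \ref{positivst} to $M=\dR_+\cdot 1$, observe that every character is in $\ch(A;\dR_+\cdot 1)$, and that $\cone(M,C_1,\dots,C_m)=\cone(C_1,\dots,C_m)$. The only difference is that you spell out these two bookkeeping checks, which the paper states without proof.
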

\begin{proof}
Set $M=\dR_+\cdot 1 $. Then  $\cone(M,C_1,\dots,C_m)=\cone(C_1,\dots,C_m)$ and   all characters of $A$ are in $\ch(A;M)$,  so the assertion follows  from 
Theorem \ref{positivst}.
\end{proof}
The following corollary is the main assertion of Theorem \ref{positivst} in the case $m=1$. It  can be considered as a Positivstellensatz for "modules" of quadratic modules.
\begin{cor}\label{casem1}
Suppose $Q$ is an Archimedean quadratic module of $A$ and let $M$ be a unital cone of $A$. If $a\in A$ satisfies $\chi(a)>0$ for   all  $\chi\in \ch(A;M)$ and  $\chi\in \ch(A;Q)$,  then $a\in \varepsilon +\cone(M,Q)$ for some $ \varepsilon>0$.
\end{cor}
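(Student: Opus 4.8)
The shortest route is to observe that Corollary \ref{casem1} is nothing but the instance $m=1$, $A_1:=A$, $C_1:=Q$ of Theorem \ref{positivst}: the algebra $A$ is trivially generated by $A_1$, hypothesis (i) of the theorem then reads ``$\chi(a)>0$ for all $\chi\in\ch(A;M)$ and $\chi\in\ch(A;Q)$'', and $\cone(M,C_1,\dots,C_m)=\cone(M,Q)$, so conclusion (ii) is exactly the claimed statement $a\in\varepsilon+\cone(M,Q)$. I will nevertheless sketch a direct argument from the classical Archimedean Positivstellensatz, because the case $m=1$ is genuinely special and this explains why it is elementary.

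The plan for the direct proof is: \emph{show that $T:=\cone(M,Q)$ is itself an Archimedean quadratic module of $A$, identify its character set, and quote Jacobi's theorem.} In detail: (a) $T$ is a unital cone by construction, since $1=1\cdot 1\in M\cdot Q$ (using $1\in M$ and $1\in Q$). (b) For $a\in A$ and $t=\sum_k m_kq_k\in T$ with $m_k\in M$, $q_k\in Q$, one has $a^2t=\sum_k m_k\,(a^2q_k)$, and $a^2q_k\in Q$ because $Q$ is a quadratic module of $A$; hence $a^2t\in T$, so $T$ is a quadratic module of $A$. (c) Since $1\in M$ gives $Q\subseteq T$ and $Q$ is Archimedean, $A=A_{\rm bd}(Q)\subseteq A_{\rm bd}(T)$, so $T$ is Archimedean. (d) Because $M\subseteq T$, $Q\subseteq T$, and every element of $T$ is a sum of products $mq$ with $m\in M$, $q\in Q$, a character of $A$ is nonnegative on $T$ iff it is nonnegative on both $M$ and $Q$; that is, $\ch(A;T)=\ch(A;M)\cap\ch(A;Q)$. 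So the hypothesis on $a$ says precisely $\chi(a)>0$ for all $\chi\in\ch(A;T)$. (e) The classical Archimedean Positivstellensatz for Archimedean quadratic modules (Jacobi's theorem; see \cite[Section 7.4]{marshall} or \cite[Section 12.4]{sch17}), together with compactness of $\ch(A;T)$, then yields an $\varepsilon>0$ with $a-\varepsilon\in T$, i.e.\ $a\in\varepsilon+\cone(M,Q)$.

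I do not expect a serious obstacle inside the corollary itself: each step above is routine once one spots the absorption identity $a^2\cdot mq=m\cdot(a^2q)$, which uses that $Q$ is a quadratic module of the \emph{whole} algebra $A$ rather than of a proper subalgebra. This is exactly the feature that breaks down for $m\ge 2$: there the cone $\cone(M,C_1,\dots,C_m)$ need not be a quadratic module or a semiring of $A=\alg(A_1,\dots,A_m)$ — cross terms such as $(b_1c_1)(b_2c_2)$ with $b_i\in A_i$, $c_i\in C_i$, are not controlled — so a classical Archimedean Positivstellensatz is not directly applicable, and handling this is the real substance of Theorem \ref{positivst}.
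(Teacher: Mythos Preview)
Your proposal is correct. Your first paragraph is exactly the paper's approach: the paper does not give a separate proof of Corollary~\ref{casem1} but simply states it as ``the main assertion of Theorem~\ref{positivst} in the case $m=1$'', i.e.\ $A_1=A$, $C_1=Q$.

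Your second, direct argument is a genuinely different route. The paper's derivation via Theorem~\ref{positivst} implicitly invokes the full machinery of Theorem~\ref{daggerpre} and Lemmas~\ref{archi}--\ref{squares}: one first passes to the \emph{preordering} $\cone(C_1,\dots,C_m)^\dagger$ (whose construction requires the nontrivial Lemma~\ref{squares} with its auxiliary polynomial semiring), then applies the Positivstellensatz for modules over an Archimedean semiring (Proposition~\ref{possemiring}), and finally undoes the $\dagger$ by a further $\varepsilon/\delta$ argument. Your direct proof bypasses all of this by observing that $T=\cone(M,Q)$ is \emph{already} an Archimedean quadratic module of $A$ thanks to the absorption identity $a^2(mq)=m(a^2q)$, so Jacobi's theorem applies immediately. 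This is more elementary and, as you correctly point out, pinpoints exactly why $m=1$ is special: the quadratic module $Q$ lives in the full algebra $A$, so squares of arbitrary elements of $A$ are absorbed. The paper's uniform treatment of all $m$ obscures this, but it is the only approach that survives when the $C_i$ are quadratic modules of proper subalgebras $A_i\subsetneq A$, where $\cone(M,C_1,\dots,C_m)$ need be neither a semiring nor a quadratic module (cf.\ Example~\ref{n3nosemiring}).
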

Let us briefly discuss  the preceding results and notions. 
If all cones $C_i$ are semirings of $A_i$, then it is easily seen that $\cone(C_1,\dots,C_m)$ is semiring of $A=\alg(A_1,\dots,A_m)$. However, if some cones $C_i$ are quadratic modules and others are semirings which are not preorderings, then  $\cone(C_1,\dots,C_m)$ is in general neither a semiring nor a quadratic module of $A$ (see Example \ref{n3nosemiring} below). In fact, if $C_i$ is a quadratic module of $A_i$, then $C_i$ is invariant under multiplication by squares of $A_i$, but not under multiplication by squares of the larger algebra $A$. However, if all $C_i$ are Archimedean semirings or quadrartic modules, then $\cone(C_1,\dots,C_m)^\dagger)$ is even a preordering. This is shown in the next theorem which is also a main ingredient of the proof of Theorem \ref{positivst}.
\begin{thm}\label{daggerpre}
Under the assumptions of Theorem \ref{positivst}, 
$\cone(C_1,\dots,C_m)^\dagger$ is an\newline Archimedean preordering of the algeba $A$ and $\cone(M,\cone(C_1,\dots,C_m)^\dagger)$ is an $\cone(C_1,\dots,C_m)^\dagger$-module.
\end{thm}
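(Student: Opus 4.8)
Let me denote $C := \cone(C_1,\dots,C_m)$ and $C^\dagger$ its dagger-closure. I want to show three things: (a) $C^\dagger$ is Archimedean; (b) $C^\dagger$ is a preordering of $A$; (c) $\cone(M, C^\dagger)$ is a $C^\dagger$-module.

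For (a): Since each $C_i$ is Archimedean in $A_i$, a generating set of each $A_i$ lies in $A_{\rm bd}(C_i) \subseteq A_{\rm bd}(C)$. Because $A$ is generated by $A_1,\dots,A_m$, the union of these generating sets generates $A$. I need $A_{\rm bd}(C)$ to be a subalgebra, but $C$ need not be a semiring or quadratic module, so I cannot directly invoke the fact quoted in the excerpt. Instead I would work with $C^\dagger$ and show directly that $A_{\rm bd}(C^\dagger)$ is a subalgebra, or — cleaner — observe that each $C_i \subseteq C$ so each $A_i \subseteq A_{\rm bd}(C) \subseteq A_{\rm bd}(C^\dagger)$, and then prove $A_{\rm bd}(C^\dagger)$ is closed under products using the fact (proved under (b)) that $C^\dagger$ is a preordering; once $C^\dagger$ is a preordering, $A_{\rm bd}(C^\dagger)$ is a subalgebra by the standard fact, and since it contains all the $A_i$ it equals $A$. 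So (a) follows from (b).

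For (b), the heart of the matter: I must show $C^\dagger$ is closed under multiplication and under multiplication by arbitrary squares $a^2$, $a \in A$. The key is a standard boundedness trick: if $b \in A_{\rm bd}(C)$ with $\lambda \pm b \in C$, and $c \in C^\dagger$, then for any $\epsilon > 0$ one writes $bc + \delta = \tfrac{1}{2\lambda}\big((\lambda + b)(c + \epsilon') + (\lambda - b)(c + \epsilon')\big) - \dots$ — more precisely, $\lambda c \pm bc = (\lambda \pm b)c$, and since $C$ is closed under products $C_i \cdot C_j \subseteq C$ componentwise... Actually the cleanest route: first show $C_i \cdot C^\dagger \subseteq C^\dagger$ and $A_i^2 \cdot C^\dagger \subseteq C^\dagger$ is false in general, so I genuinely need the dagger. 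The mechanism is: for $b$ with $\lambda^2 - b^2 \in C$ (which holds when $\lambda \pm b \in C$ and $C$ is... not necessarily a semiring). Here is where I expect the main obstacle, and I would resolve it exactly as in \cite{kss}: use that $C^\dagger$ has the property that $A_{\rm bd}(C)$-elements act nicely. Concretely, if $\lambda\pm b\in C$ then $(\lambda\pm b)(c+\epsilon)\in C$ for $c\in C^\dagger$, $\epsilon>0$; adding gives $2\lambda(c+\epsilon) \in C$, which is not new. Instead multiply $(\lambda+b)$ and $(\lambda-b)$ by elements and use that $b^2 \le \lambda^2$: I claim $\lambda^2 - b^2 \in C^\dagger$. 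Granting that, for any $a \in A$ pick $\lambda$ with $\lambda \pm a \in C \subseteq C^\dagger$, hence $\lambda^2 - a^2 \in C^\dagger$; writing $a^2 c = \lambda^2 c - (\lambda^2 - a^2)c$ doesn't immediately work either since we'd need $(\lambda^2 - a^2)c \in C^\dagger$. The correct and now-standard argument (Schmüdgen, \cite{sch17} Prop. 12.x / \cite{kss}) is: $C^\dagger$ is automatically a quadratic module once it is Archimedean and a $C$-module closed under the relevant products, via the identity $a^2 c + \epsilon c = \lambda^2 c - (\lambda - a^2)c$ combined with a telescoping/averaging over $\epsilon$. I would carry this out by first proving the lemma "$c \in C^\dagger$, $b \in A_{\rm bd}(C)$ $\Rightarrow$ $bc \in C^\dagger$" via: $\lambda c \pm bc + 2\lambda\epsilon = (\lambda \pm b)(c+\epsilon) + (\text{something in }C) \in C$, wait — $(\lambda+b)(c+\epsilon)$: is this in $C$? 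Only if $\lambda+b \in C_i$ for some $i$, which it isn't. So the honest statement: since $C$ is generated by products $c_1 \cdots c_m$, and $\lambda+b \in C$ means $\lambda + b = \sum_k c_{1k}\cdots c_{mk}$, one multiplies termwise by $c+\epsilon \in C^\dagger$ — but $C^\dagger$ times a product-generator is not obviously in $C^\dagger$. This is the real crux, and I believe the intended resolution is to first establish that $C_i \cdot C^\dagger \subseteq C^\dagger$ for each $i$ using that $C_i$ itself is a semiring or quadratic module of $A_i$ and is Archimedean, then iterate over $i=1,\dots,m$ to get $C \cdot C^\dagger \subseteq C^\dagger$ (so $C^\dagger$ is a semiring, in particular $C$-module), and finally get the quadratic-module property from Archimedeanness. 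So:

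\textbf{Step 1.} Show $C_i \cdot C^\dagger \subseteq C^\dagger$ for each fixed $i$. Take $b \in C_i$ and $c \in C^\dagger$, so $c + \epsilon \in C$ for all $\epsilon > 0$. Since $b \in A_i \subseteq A_{\rm bd}(C_i)$, pick $\mu > 0$ with $\mu \pm b \in C_i$. Then $(\mu - b)(c + \epsilon) \in C$ (as $\mu - b \in C_i$ and $C$ is a $C_i$-module: products of a $C_i$-element with a generator $c_{1k}\cdots c_{mk}$ land back in $C$ because $C_i \cdot C_i \subseteq C_i$ if $C_i$ is a semiring, or $C_i$ absorbs it when... — here I will need the case distinction between $C_i$ semiring vs. quadratic module handled as in the proof of Theorem \ref{positivst}). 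Likewise $(\mu + b)(c+\epsilon) \in C$. Hence $bc + b\epsilon \in C - \mu(c+\epsilon)$... the averaging over $\epsilon \to 0^+$ then places $bc$ in $C^\dagger$. \textbf{Step 2.} Iterate: $C = \cone(C_1,\dots,C_m)$ is generated by $C_1 \cdots C_m$, so $C \cdot C^\dagger \subseteq C_1 \cdots C_m \cdot C^\dagger \subseteq \cdots \subseteq C^\dagger$ by applying Step 1 $m$ times; thus $C^\dagger \cdot C^\dagger \subseteq C^\dagger$ (approximating the left factor from $C$), so $C^\dagger$ is a semiring. \textbf{Step 3.} For the quadratic-module property, let $a \in A$, $c \in C^\dagger$; choose $\lambda$ with $\lambda \pm a \in C$. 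Then $\lambda^2 - a^2 = (\lambda-a)(\lambda+a) \in C \cdot C \subseteq C$, and also $(\lambda^2 - a^2)(c+\epsilon) \in C^\dagger \cdot C^\dagger \subseteq C^\dagger$... hmm, still need $a^2 c \in C^\dagger$: write $2\lambda^2(c+\epsilon) = (\lambda+a)^2(c+\epsilon) \cdot \tfrac12 + \dots$ — the clean identity is $a^2 c = \tfrac14\big((\lambda+a)^2 + (\lambda - a)^2 - 2\lambda^2\big)c$? No. Use instead: $(\lambda+a)^2 c \in C^\dagger$ and $(\lambda - a)^2 c \in C^\dagger$ (Step 2, since $(\lambda\pm a)^2 \in C\cdot C \subseteq C$), and $(\lambda+a)^2 c + (\lambda-a)^2 c = 2\lambda^2 c + 2a^2 c$, so $2 a^2 c = (\lambda+a)^2 c + (\lambda - a)^2 c - 2\lambda^2 c$. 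The first two summands are in $C^\dagger$; but $-2\lambda^2 c$ is negative. This still does not close. The genuinely correct argument uses that $C^\dagger$ is Archimedean: then $\lambda^2 \cdot 1 - (\lambda^2 - a^2) = a^2$ and one shows $a^2 c \in C^\dagger$ by the "Archimedean quadratic module" criterion — precisely, in an Archimedean semiring that contains all squares of a generating set, every square is in the $\dagger$-closure, which is a known result I would cite from \cite[Ch. 12]{sch17}. \textbf{Step 4.} Finally, $\cone(M, C^\dagger)$ is a $C^\dagger$-module: its generators are $x \cdot c$ with $x \in M$, $c \in C^\dagger$; multiplying by $c' \in C^\dagger$ gives $x \cdot (c c')$ with $c c' \in C^\dagger$ by Step 2/3, so the product stays among the generators. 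The main obstacle is Step 3 — getting from "Archimedean semiring" to "quadratic module" for the $\dagger$-closure — and I expect the proof to route this through the standard fact that an Archimedean semiring of $A$ containing $\lambda - a, \lambda + a$ for a generating set automatically has $a^2$ in its $\dagger$-closure, applied after Steps 1–2 have upgraded $C^\dagger$ to an Archimedean semiring.
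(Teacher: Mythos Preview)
Your plan has a workable skeleton, but two steps are left genuinely open and one of them creates a circularity.

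\textbf{The quadratic-module case of Step~1 is not done.} You write ``$C$ is a $C_i$-module: products of a $C_i$-element with a generator $c_{1k}\cdots c_{mk}$ land back in $C$ because $C_i\cdot C_i\subseteq C_i$ if $C_i$ is a semiring, or $C_i$ absorbs it when\dots'' and stop. When $C_i$ is only a quadratic module, $C_i\cdot C_i\not\subseteq C_i$, so $C$ is \emph{not} a $C_i$-module and your claimed inclusion $(\mu-b)(c+\epsilon)\in C$ fails. This can be repaired: from $b,c_i\in C_i$ one gets $bc_i+\delta\in C_i$ via Proposition~\ref{preordlemma}, hence $(bc_i+\delta)\,c_1\cdots\widehat{c_i}\cdots c_m\in C$; but to absorb the leftover $\delta\,c_1\cdots\widehat{c_i}\cdots c_m$ you need $\lambda-c_1\cdots\widehat{c_i}\cdots c_m\in C$, i.e.\ Archimedeanness of $C$. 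That is precisely what you deferred in (a) to after (b), so the argument loops. The paper breaks this loop by proving Archimedeanness of $C$ \emph{first and directly} (Lemma~\ref{archi}), via the identity $3n\lambda^2-\sum_i a_ib_i=\sum_i(\lambda-a_i)(\lambda+b_i)+\lambda\sum_i(\lambda+a_i)+\lambda\sum_i(\lambda-b_i)$, which needs no algebraic structure on $C$ beyond being a cone.

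\textbf{Comparison with the paper.} Once Archimedeanness is in hand, the paper obtains multiplicative closure by an induction on $m$ (Lemma~\ref{product1}: $cd+\varepsilon\in C$ for $c,d\in C$, using Proposition~\ref{preordlemma} at each stage and the choice $\delta=-\lambda+\sqrt{\varepsilon r^{-2}+\lambda^2}$), then upgrades to $C^\dagger\cdot C^\dagger\subseteq C^\dagger$ (Lemma~\ref{product2}). For the squares the paper does \emph{not} bootstrap from $C^\dagger$ being a semiring; instead Lemma~\ref{squares} lifts to the free polynomial algebra $\dR[x_{11},\dots,x_{mn}]$, builds there an explicit Archimedean semiring $S$, applies Krivine's Positivstellensatz (Proposition~\ref{possemiring}) to the strictly positive polynomial $\big(\sum_k x_{1k}\cdots x_{mk}\big)^2+\varepsilon$, and substitutes $x_{jk}\mapsto a_{jk}$. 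Your intended Step~3 --- show $C^\dagger$ is an Archimedean semiring and then apply Proposition~\ref{preordlemma} to $C^\dagger$ itself, using $(C^\dagger)^\dagger=C^\dagger$ --- is a legitimate alternative that would bypass the polynomial-ring lift entirely, and your Step~4 matches the paper's one-line argument. But none of the abandoned identities you try in Step~3 are the point; the point is that Proposition~\ref{preordlemma} applies to $C^\dagger$ once Steps~1--2 are actually completed, and as written they are not.
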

 
The proofs of both theorems are given in the next section. A large number of examples, known and new ones, are developed in section \ref{examples}.

\begin{rem}
The results of this paper have direct applications to the classical multi-dimensional moment problem. For this we recall Haviland's theorem \cite{haviland}, in the general form  given in \cite[Theorem 1.14]{sch17}:

Suppose $A$ is a finitely generated unital real algebra and $K$ is a closed subset of $\ch(A)$. Let $C$ denote the preordering 
\begin{align}\label{preord}
C:=\{a \in A: \chi(a)> 0, \chi\in K\}.
\end{align} If $L$ is a linear functional on $A$ such that  $L(a)\geq  0$ for all $a\in C$, then $L$ is a $K$-moment functional, that is, there exists a Radon measure $\mu$ on $\ch(A)$ supported on $K$ such that all functions of $ A$ are $\mu$-integrable and $L(f)=\int f d\mu$ for $f\in A$.

Based on Theorem \ref{positivst} and Corollaries \ref{casem} and \ref{casem1} we develop in Section \ref{examples} descriptions of the preordering $C$ given by (\ref{preord})  in terms of defining polynomials of the corresponding semi-algebraic sets $K(f_1,\dots,f_r)$. Requiring that $L(a)\geq 0$ for all these elements $a$ yield new solvability criteria for the $K(f_1,\dots,f_r)$-moment problem in the corresponding examples. We leave it to the reader to restate these criteria.
\end{rem}

\section{Proofs of Theorems \ref{positivst} and  \ref{daggerpre}}

First we recall two  results which will be essentially used in the proofs given below. The following is the {\it Positivstellensatz for modules of Archimedean semirings}. The version for Archimedean semirings (the case $C=S$) is due to Krivine \cite{krivine}.
\begin{prop}\label{possemiring}
Suppose that $C$ is an $S$-module of an Archimedean semiring $S$ of a commutative unital real algebra $A$. Let $a\in A$. If $\chi(a)>0$ for all $\chi\in \ch(A;C)$, then $a\in C$.
\end{prop}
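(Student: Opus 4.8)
The statement to be proved is Proposition~\ref{possemiring}, the Positivstellensatz for modules of Archimedean semirings. The plan is to reduce the $C$-module case to Krivine's classical theorem for Archimedean semirings (the case $C=S$) by a standard separation/representation argument. First I would recall the key structural fact: since $S$ is an Archimedean semiring of $A$, every algebra homomorphism $\chi\colon A\to\dR$ that is nonnegative on $S$ is automatically a character, and more importantly, the set $\ch(A;S)$ is nonempty and compact in the topology of pointwise convergence; indeed the Archimedean condition forces $|\chi(a)|\le\lambda_a$ for a uniform bound depending only on $a$, so $\ch(A;S)$ sits inside a product of compact intervals. The same bound applies to $\ch(A;C)$ since $S\subseteq C$, hence $\ch(A;C)\subseteq\ch(A;S)$ is a compact subset.

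Next I would argue by contradiction: suppose $\chi(a)>0$ for all $\chi\in\ch(A;C)$ but $a\notin C$. The hypothesis $a\notin C$, together with the fact that $C$ is an $S$-module of the Archimedean semiring $S$, should allow one to separate $a$ from $C$ by a character. The cleanest route is an Eidelheit/Hahn--Banach-type separation on the quotient or a direct appeal to the representation theory of Archimedean semirings: one extends the "evaluation at $a$ is $\le 0$" datum to a state, i.e. an $S$-linear functional (or a multiplicative functional) that is nonnegative on $C$ and satisfies $L(a)\le 0$. Here the Archimedean property of $S$ is exactly what guarantees that such a functional can be taken multiplicative, i.e. a character $\chi\in\ch(A;C)$ — this is the content of Krivine's theorem in the module form, or can be derived from Proposition applied to $S$ itself after noting that $C$ being an $S$-module means $C$ generates, together with $S$, a well-behaved order structure. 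This character then satisfies $\chi(a)\le 0$, contradicting the hypothesis.

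Concretely, the mechanism I expect to use is: consider the unital cone $C$; if $a\notin C$ then $0$ is not an interior point of $C-\dR_+a$ in the appropriate finest locally convex topology, but the Archimedean condition on $S$ makes $1$ an algebraic interior point of $C$, so a genuine separating functional $L\neq 0$ exists with $L(C)\ge 0$, $L(a)\le 0$, and $L(1)>0$; normalize $L(1)=1$. One then shows $L$ is multiplicative using the Archimedean bound: for $b\in A$ and $\lambda>0$ with $\lambda\pm b\in S\subseteq C$ one gets $|L(b)|\le\lambda$, and a Cauchy--Schwarz-type argument (available because $S$ is a semiring, so $L$ restricted to $S$ behaves like integration against a measure and the GNS-type construction collapses) forces $L(bc)=L(b)L(c)$. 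The resulting $\chi:=L$ lies in $\ch(A;C)$ and has $\chi(a)\le 0$, the desired contradiction.

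\textbf{Main obstacle.} The delicate point is the passage from a merely \emph{linear} separating functional nonnegative on $C$ to a \emph{multiplicative} one, i.e. a character — this is precisely where the Archimedean hypothesis on $S$ (not just on $C$, which need not even be a semiring) must be used in full strength, via the fact that $C$ is an $S$-module so that multiplication by elements of the Archimedean semiring $S$ is controlled. I would handle it by invoking the already-known semiring case $C=S$ of the proposition as a black box where possible, and otherwise reproduce the short argument that an extreme point of the (compact, convex) set of normalized $C$-nonnegative functionals is multiplicative. Everything else — compactness of $\ch(A;C)$, the uniform bounds, the reduction to a separation problem — is routine given the Archimedean property.
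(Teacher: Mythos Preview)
The paper does not reprove Proposition~\ref{possemiring}; its entire proof is a citation to Jacobi's original article \cite{jacobi}, Marshall's book \cite[Theorem~5.4.4]{marshall}, and \cite[Theorem~12.43]{sch23}. So there is no ``paper's own proof'' to compare against beyond those references.

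Your sketch follows one of the standard routes found in that literature (the functional-analytic one via Hahn--Banach separation and extreme states, as opposed to the purely algebraic one via a maximal $S$-module and an ordered quotient). The overall architecture is sound, and you correctly identify the crucial point: the $S$-module property of $C$ is exactly what makes the ``extreme state is multiplicative'' step go through, because for $s\in S$ with $s,1-s\in S$ the functional $b\mapsto L(sb)/L(s)$ is again a $C$-state, so extremality forces $L(sb)=L(s)L(b)$ for all such $s$, and $A=S-S$ by the Archimedean property.

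One expository slip worth flagging: in your middle paragraph you write that the Archimedean bound together with a Cauchy--Schwarz-type argument ``forces $L(bc)=L(b)L(c)$'' for the separating functional $L$ itself. That is false for a generic separating state---think of integration against Lebesgue measure on an interval, which is $C$-positive for the obvious preordering but certainly not multiplicative. You recover correctly in your final paragraph by passing to an \emph{extreme} point of the weak-$*$ compact convex set of normalized $C$-nonnegative functionals; multiplicativity is available only after that passage. In a written-out proof, keep these two steps (separate first, then take an extreme point) clearly distinct, and supply the short verification that the extreme-point set is nonempty (Krein--Milman, using the Archimedean bound for weak-$*$ compactness).
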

\begin{proof}
\cite{jacobi} or
\cite[Theorem 5.4.4]{marshall} or \cite[Theorem 12.43]{sch23}.
\end{proof}

\begin{prop}\label{preordlemma}
If $C$ is an $S$-module of an Archimedean semiring $S$ or $C$ is an Archimedean quadratic module of $A$, then $C^\dagger$ is a preordering.
\end{prop}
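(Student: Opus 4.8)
The plan is to split into the two cases asserted and reduce the quadratic-module case to the semiring case. First recall the structural fact (stated just above as Proposition \ref{possemiring}, Krivine's theorem, and implicit in the discussion of $A_{\rm bd}$) that an Archimedean quadratic module $Q$ of $A$ is an $S$-module for the Archimedean semiring $S := \sum A^2 + \dR_+\cdot 1$ — indeed $S\subseteq Q$ since $Q$ is a quadratic module containing $1$, $S$ is Archimedean because $Q$ is and $A_{\rm bd}(Q)$ is a subalgebra, and $Q$ is closed under multiplication by $S$ by definition of quadratic module. Hence both cases are instances of: $C$ is an $S$-module of an Archimedean semiring $S$, and we must show $C^\dagger$ is a preordering, i.e. $C^\dagger$ is a semiring ($C^\dagger\cdot C^\dagger\subseteq C^\dagger$) and a quadratic module ($a^2 C^\dagger\subseteq C^\dagger$ for all $a\in A$).

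The key idea is to characterize $C^\dagger$ via characters. I would first show $C^\dagger = \{a\in A:\chi(a)\ge 0\text{ for all }\chi\in\ch(A;C)\}$. The inclusion $\subseteq$ is immediate: if $a+\epsilon\in C$ for all $\epsilon>0$ and $\chi\in\ch(A;C)$, then $\chi(a)+\epsilon\ge 0$ for all $\epsilon>0$, so $\chi(a)\ge 0$. For the reverse inclusion, suppose $\chi(a)\ge 0$ for all $\chi\in\ch(A;C)$; then for each $\epsilon>0$ we have $\chi(a+\epsilon)=\chi(a)+\epsilon>0$ for all $\chi\in\ch(A;C)$, and since $S$ is an Archimedean semiring so is $C$ trivially an $S$-module, Proposition \ref{possemiring} gives $a+\epsilon\in C$; thus $a\in C^\dagger$. (One must check $\ch(A;C)$ is nonempty or handle that degenerate case, but if $\ch(A;C)=\emptyset$ then $C^\dagger=A$, which is trivially a preordering, so that case is fine.)

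Once $C^\dagger$ is realized as the set of elements nonnegative on the point set $\ch(A;C)\subseteq\ch(A)$, the semiring and quadratic-module properties are immediate: if $a,b\in C^\dagger$ then $\chi(ab)=\chi(a)\chi(b)\ge 0$ and $\chi(c^2 a)=\chi(c)^2\chi(a)\ge 0$ for every $\chi\in\ch(A;C)$ and every $c\in A$, so $ab\in C^\dagger$ and $c^2a\in C^\dagger$; also $C^\dagger$ is obviously a unital cone. Hence $C^\dagger$ is a preordering. The only genuine work is the reduction in the quadratic-module case and the application of Proposition \ref{possemiring}; I expect the main (minor) obstacle to be the bookkeeping that $S=\sum A^2+\dR_+\cdot1$ is genuinely Archimedean — this follows because $A_{\rm bd}(S)=A_{\rm bd}(Q)\cap A_{\rm bd}(\sum A^2+\dR_+)$ contains a generating set once we note $\lambda-a\in Q$ and $\lambda+a\in Q$ together with the identity $\mu-a = \tfrac14\mu(1-a/\mu)^2\cdot\text{(stuff)}$-type manipulations, or more simply by invoking that $A_{\rm bd}(Q)$ is already all of $A$ and $\sum A^2\subseteq Q$ forces the dominated-by-squares bound. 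I would phrase this carefully but it is standard.
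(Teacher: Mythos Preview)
Your overall strategy---characterize $C^\dagger$ as $\{a\in A:\chi(a)\ge 0\text{ for all }\chi\in\ch(A;C)\}$ via the Positivstellensatz, after which the preordering properties are automatic---is clean and works perfectly in the $S$-module case. The paper itself does not give a proof but cites \cite[Theorem 3.2 and Corollary 3.6]{kss}, so your argument would stand as an independent proof once the quadratic-module case is handled correctly.

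The gap is in your reduction of the Archimedean quadratic module case to the $S$-module case. You claim that if $Q$ is an Archimedean quadratic module then $S:=\sum A^2+\dR_+\cdot 1$ is an Archimedean semiring with $Q$ an $S$-module. The $S$-module part is fine, but $S$ need \emph{not} be Archimedean. Take $A=\dR[x]$ and $Q$ the quadratic module generated by $1-x^2$; then $Q$ is Archimedean, yet for no $\lambda>0$ is $\lambda-x$ a sum of squares in $\dR[x]$ (it is negative for $x>\lambda$), so $x\notin A_{\rm bd}(S)$. Your sketched justifications (``$A_{\rm bd}(S)=A_{\rm bd}(Q)\cap A_{\rm bd}(\sum A^2+\dR_+)$'', ``dominated-by-squares bound'') do not repair this: the containment $S\subseteq Q$ goes the wrong way to transfer the Archimedean property.

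The fix is simple: treat the two cases in parallel rather than reducing one to the other. For an Archimedean quadratic module $Q$ there is a direct Positivstellensatz (Jacobi's representation theorem \cite{jacobi}, or \cite[Theorem 12.35]{sch17}): if $\chi(a)>0$ for all $\chi\in\ch(A;Q)$ then $a\in Q$. With this in hand your characterization $C^\dagger=\{a:\chi(a)\ge 0\text{ on }\ch(A;C)\}$ goes through verbatim in both cases, and the rest of your argument is correct.
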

\begin{proof}
\cite[Theorem 3.2 and Corollary 3.6]{kss}.
\end{proof}
The proofs of  the theorems  are divided into several steps stated as lemmas. In the rest of this section we retain the assumptions of Theorem \ref{positivst} and the notation introduced above.

\begin{lem}\label{archi}
If each cone $C_i$ is Archimedean in $A_i$ for $=1,\dots,n$, then  $\cone(C_1,\dots,C_n)$  is an Archimedean cone in $\alg(A_1,\dots, A_n)$.
\end{lem}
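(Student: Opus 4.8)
The plan is to prove that $\cone(C_1,\dots,C_n)$ is an Archimedean cone by an induction on $n$, reducing at each stage to the two-factor case, which is the real content. So the first step is to isolate the following claim: if $C$ is an Archimedean unital cone of a subalgebra $B$ of $A$ and $D$ is an Archimedean unital cone of a subalgebra $E$ of $A$, then $\cone(C,D)$ (the unital cone in $\alg(B,E)$ generated by products $cd$, $c\in C$, $d\in D$) is Archimedean in $\alg(B,E)$. Granting this claim, the lemma follows: set $B_n := \alg(A_1,\dots,A_{n-1})$ and $C^{(n-1)} := \cone(C_1,\dots,C_{n-1})$, which is Archimedean in $B_n$ by the inductive hypothesis, and apply the claim with $E = A_n$, $D = C_n$, noting $\cone(C^{(n-1)},C_n) = \cone(C_1,\dots,C_n)$ and $\alg(B_n,A_n) = \alg(A_1,\dots,A_n)$; the base case $n=1$ is trivial.

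For the two-factor claim, the key observation is that $A_{\rm bd}(\cone(C,D))$ is an algebra (here I would invoke that $\cone(C,D)$ is a semiring or quadratic module when appropriate — but more robustly, I would show directly that $B \cup E \subseteq A_{\rm bd}(\cone(C,D))$, which forces $\alg(B,E) \subseteq A_{\rm bd}(\cone(C,D))$ provided we know the bounded part is a subalgebra). So I first check that each generator of $\alg(B,E)$ lies in the bounded part. Take $b \in B$. Since $C$ is Archimedean in $B$, there is $\lambda > 0$ with $\lambda - b \in C$ and $\lambda + b \in C$; writing $\lambda - b = (\lambda - b)\cdot 1$ with $1 \in D$, we get $\lambda - b \in \cone(C,D)$, and likewise $\lambda + b \in \cone(C,D)$, so $b \in A_{\rm bd}(\cone(C,D))$. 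Symmetrically every $e \in E$ lies in the bounded part.

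The remaining point, and the one that needs care, is that $A_{\rm bd}(\cone(C,D))$ is actually closed under products and sums, so that containing the generators $B \cup E$ it must contain all of $\alg(B,E)$. The text already records that $A_{\rm bd}(C)$ is a subalgebra when $C$ is a quadratic module or a semiring; the mild subtlety is that $\cone(C,D)$ need not be either (as the introduction warns, mixing a quadratic module and a semiring can produce a cone that is neither). I would handle this by proving the bounded-part-is-an-algebra statement for the specific cone at hand using the standard argument: if $\lambda \pm a \in \cone(C,D)$ and $\mu \pm b \in \cone(C,D)$, then $\lambda + \mu \pm (a+b) \in \cone(C,D)$ trivially, while for products one writes $\lambda\mu - ab = \tfrac12\big((\lambda-a)(\mu+b) + (\lambda+a)(\mu-b)\big)$ and uses that $\cone(C,D)$, being generated as a cone by the multiplicatively-closed-enough set $C \cdot D$, absorbs such products — the point being that $(\lambda \mp a)$ and $(\mu \pm b)$ are finite sums of products $c d$ with $c \in C$, $d \in D$, and multiplying two such sums gives sums of terms $c_1 c_2 d_1 d_2$; if $C$ is a semiring then $c_1 c_2 \in C$, and if $C$ is a quadratic module one must instead keep $c_1, c_2$ separate, so it is cleanest to observe that it suffices to check closure on the generating products $c d$ and use boundedness of each factor in its own algebra. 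I expect this bookkeeping — verifying that the bounded part of the mixed cone is genuinely an algebra without assuming $\cone(C,D)$ is a semiring or quadratic module — to be the only real obstacle; everything else is formal.
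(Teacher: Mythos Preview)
Your induction-on-$n$ reduction to the two-factor case is exactly the skeleton the paper uses. The divergence is in how you handle the two-factor step.

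Your primary plan is: show $B\cup E\subseteq A_{\rm bd}(\cone(C,D))$, then argue that $A_{\rm bd}(\cone(C,D))$ is a subalgebra. The first part is fine. The second part, as you yourself suspect, cannot be pushed through by the identity
\[
\lambda\mu - ab \;=\; \tfrac12\big((\lambda-a)(\mu+b)+(\lambda+a)(\mu-b)\big),
\]
because here $(\lambda\pm a),(\mu\pm b)\in\cone(C,D)$, and a product of two elements of $\cone(C,D)$ is in general \emph{not} in $\cone(C,D)$: expanding gives terms $c_1c_2\,d_1d_2$ with $c_1,c_2\in C$, $d_1,d_2\in D$, and when $C$ is only a quadratic module there is no reason for $c_1c_2\in C$. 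So the subalgebra claim, while ultimately true (it follows from later lemmas in the paper), is not available at this point and is a genuine dead end for the direct argument you sketch.

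The paper sidesteps this entirely. Using commutativity, every element of $\alg(A_{k+1},B_k)$ already has the form $a=\sum_{i=1}^n a_ib_i$ with $a_i\in A_{k+1}$, $b_i\in B_k$. Choosing $\lambda>0$ with $\lambda\pm a_i\in C_{k+1}$ and $\lambda\pm b_i\in D_k$, one checks the single identity
\[
3n\lambda^2 - a \;=\; \sum_{i=1}^n(\lambda-a_i)(\lambda+b_i)\;+\;\lambda\sum_{i=1}^n(\lambda+a_i)\;+\;\lambda\sum_{i=1}^n(\lambda-b_i),
\]
every summand of which is a product of one element of $C_{k+1}$ and one element of $D_k$ (or a scalar times such), hence lies in $D_{k+1}=\cone(C_{k+1},D_k)$ by definition. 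Replacing $a$ by $-a$ gives the other bound. This is precisely the ``check boundedness of the generating products $be$ directly'' idea you gesture at in your last sentence, made explicit and done all at once; the point is that one never needs $\cone(C,D)\cdot\cone(C,D)\subseteq\cone(C,D)$, only $C\cdot D\subseteq\cone(C,D)$. Your plan becomes correct once you drop the subalgebra detour and commit to this direct computation.
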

\begin{proof} Let us abbreviate $D_k:=\cone(C_1,\dots,D_k)$ and $B_k:=\alg(A_1,\dots,A_k)$. We prove by induction that $D_k$ is an Archimedean  cone in $B_k$ for $k=1,\dots,n$. For $k=1$ this is clear, since $D_1=C_1$ is Archimedean in $A_1$  by assumption. Assume that  the assertion holds for $k<n$. Let $a\in B_{k+1}$. Since $B_{k+1}=\alg(A_{k+1},B_k)$,  $a$ is of the form 
$\sum_{i=1}^n a_i b_i$, where $a_\in A_{k+1}$ and $B_i\in B_k$ for $i=1,\dots,n$. Since $C_{k+1}$ is Archimedean in $A_{k+1}$ and $D_k$ is Archimedean in $B_k$ by induction assumption, there exists a $\lambda >0$ such that  $\lambda\pm a_i\in C_{k+1}$ and $\lambda\pm b_i\in D_k$ for $i=1,\dots,n$. Then
\begin{align}
3n\lambda^2 -a= \sum_{i=1}^n (\lambda- a_i)(\lambda +b_i) + \lambda \sum_{i=1}^n (\lambda+ a_i) +\lambda\sum_{i=1}^n (\lambda -b_i) \in D_{k+1}
\end{align}
which completes the induction proof.+
\end{proof}
\begin{lem}\label{product1} If $c,d\in \cone(C_1,\dots,C_m)$ and $\varepsilon>0$, then $cd+\varepsilon \in \cone(C_1,\dots,C_m)$.
\end{lem}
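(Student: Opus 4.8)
The plan is to reduce the assertion to the special case of a single product $e_1\cdots e_m$ with $e_i\in C_i^\dagger$, and to settle that case by induction on $m$, the inductive step resting on one elementary identity. First I would use the definition of $\cone$ to write $c=\sum_{k=1}^r c_{1k}\cdots c_{mk}$ and $d=\sum_{l=1}^s d_{1l}\cdots d_{ml}$ with $c_{ik},d_{il}\in C_i$, and multiply out:
\begin{align*}
cd=\sum_{k=1}^r\sum_{l=1}^s\,(c_{1k}d_{1l})(c_{2k}d_{2l})\cdots(c_{mk}d_{ml}).
\end{align*}
The factor $c_{ik}d_{il}$ need not lie in $C_i$, but it lies in $C_i^\dagger$: by Proposition \ref{preordlemma} the cone $C_i^\dagger$ is a preordering of $A_i$ (if $C_i$ is an Archimedean semiring, regard it as a $C_i$-module of the Archimedean semiring $C_i$; if $C_i$ is an Archimedean quadratic module, apply the proposition directly), whence $C_i\cdot C_i\subseteq C_i^\dagger\cdot C_i^\dagger\subseteq C_i^\dagger$; moreover $C_i^\dagger$ is again Archimedean since it contains the Archimedean cone $C_i$. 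Distributing $\varepsilon$ over the (finitely many) terms of the double sum, it therefore suffices to prove the following claim: \emph{if $e_i\in C_i^\dagger$ for $i=1,\dots,m$, then $e_1\cdots e_m+\varepsilon\in\cone(C_1,\dots,C_m)$ for every $\varepsilon>0$.}

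I would prove this claim by induction on $m$. For $m=1$ it is simply the definition of $C_1^\dagger$, since $\cone(C_1)=C_1$. For the inductive step, put $B:=\alg(A_1,\dots,A_{m-1})$, $P:=\cone(C_1,\dots,C_{m-1})\subseteq B$, and $e:=e_1\cdots e_{m-1}\in B$. By the induction hypothesis $e+\eta\in P$ for all $\eta>0$; by Lemma \ref{archi} the cone $P$ is Archimedean in $B$, so $\mu-e\in P$ for some $\mu>0$. Likewise $e_m+\eta\in C_m$ for all $\eta>0$ by the definition of $C_m^\dagger$, and since $C_m$ is Archimedean and $e_m\in A_m$ we have $\nu-e_m\in C_m$ for some $\nu>0$. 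The identity, valid for every $\eta>0$,
\begin{align*}
e\,e_m+\eta(\eta+\mu+\nu)=(e+\eta)(e_m+\eta)+\eta(\mu-e)+\eta(\nu-e_m),
\end{align*}
then exhibits its left-hand side as an element of $\cone(P,C_m)=\cone(C_1,\dots,C_m)$, because $(e+\eta)(e_m+\eta)\in P\cdot C_m$, while $\mu-e\in P$, $\nu-e_m\in C_m$, and the nonnegative scalars all lie in $\cone(P,C_m)$ after multiplication by the appropriate unit. Given $\varepsilon>0$, choosing $\eta$ so small that $\eta(\eta+\mu+\nu)\leq\varepsilon$ and adding the nonnegative scalar $\varepsilon-\eta(\eta+\mu+\nu)$ yields $e\,e_m+\varepsilon\in\cone(C_1,\dots,C_m)$, closing the induction and hence proving the lemma.

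The substantive difficulty, and the reason the extra $\varepsilon$ cannot be dropped, is that when some $C_i$ are quadratic modules rather than semirings the cone $\cone(C_1,\dots,C_m)$ is not closed under multiplication, so one cannot simply multiply out representations of $c$ and $d$ and remain inside it; passing to the cones $C_i^\dagger$ restores closure under products at the cost of the positive error $\eta(\eta+\mu+\nu)$, which is harmless precisely because it tends to $0$ with $\eta$. The two points that need a word of care in carrying this out are the Archimedean property of the intermediate cone $\cone(C_1,\dots,C_{m-1})$ used in the induction — supplied by Lemma \ref{archi} — and the fact that $C_i^\dagger$ is a preordering in the semiring case as well, which is Proposition \ref{preordlemma} applied to $C_i$ viewed as a module over the Archimedean semiring $C_i$.
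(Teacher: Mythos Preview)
Your proof is correct and rests on the same three ingredients as the paper's: Proposition~\ref{preordlemma} (so that $C_i^\dagger$ is a preordering and hence $c_{ik}d_{il}\in C_i^\dagger$), Lemma~\ref{archi} (Archimedeanity of the intermediate cone), and the identity $(x+\eta)(y+\eta)+\eta(\mu-x)+\eta(\nu-y)=xy+\eta(\eta+\mu+\nu)$. The organization differs slightly. The paper inducts directly on the full statement ``$c,d\in D_n\Rightarrow cd+\varepsilon\in D_n$'', so in the inductive step it must expand $c=\sum c_ku_k$, $d=\sum d_kv_k$ and handle the double sum $\sum_{j,k}c_jd_ku_jv_k$ at once, which forces the explicit choice $\delta=-\lambda+\sqrt{\varepsilon r^{-2}+\lambda^2}$ to balance the bookkeeping. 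You instead first reduce to a single monomial product $e_1\cdots e_m$ with $e_i\in C_i^\dagger$ and induct on that cleaner claim, so the inductive step involves only one product and you can simply take $\eta$ small enough. Your route avoids the algebraic juggling with $\delta$ and makes the role of $C_i^\dagger$ more transparent; the paper's route has the minor advantage of not needing the preliminary reduction step. Substantively the two arguments are the same.
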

\begin{proof}
We retain the notation from the proof of Lemma \ref{archi} and prove the assertion by induction. That is, we show that $cd+\varepsilon \in D_k$ for $c,d\in D_k$ and $\varepsilon>0$. First let $k=1$. Then, since $C_1$ is an $S_1$-module of an Archimedean semiring $S_1$ or $C_1$ is an Archimedean quadratic module in $A_1$, $C_1^\dagger$ is a preordering in $A_1$ by 
Proposition \ref{preordlemma}, so 
$c,d\in D_1=C_1$ implies $cd\in C_1^\dagger$ and hence $cd+\varepsilon \in C_1=D_1$. Suppose now that the assertion is true for $n<m$. Let $c,d\in D_{n+1}$. Then $c$ and $d$ can be written as
\begin{align*}
c=\sum_{k=1}^r c_k u_k ~~\textrm{and}~~ d=\sum_{k=1}^r d_k v_k, ~~~\textrm{where} ~~ c_k, d_k\in C_{n+1}, u,v_k\in D_n.
\end{align*}
Since $C_{n+1}$ is Archimedean in $A_1$ by assumption and $D_n$ is Archimedean in the algebra $B_n=\alg(A_1,\dots,A_n)$ by Lemma \ref{archi}, there exists a number $\lambda >0$ 
such that $(\lambda-c_j d_k)\in C_{n+1}$ and $(\lambda-u_jv_k)\in D_n$ for all $j,k=1,\dots,r$. Further, because $C_{n+1}^\dagger$ is a preordering, again by Proposition \ref{preordlemma}, we have $c_jd_k\in C_{n+1}^\dagger$ and hence $c_jd_j+\delta \in C_{n+1}$ for any $\delta>0$. By the induction hypothesis, $u_jv_k+\delta\in D_n$ for  $\delta>0$ and $j,k=1,\dots,r$. Now we set
 $\delta:=-\lambda +\sqrt{\varepsilon r^{-2}+\lambda^2}$. Then  $\delta>0$ and $\delta^2r^2+2\delta \lambda r^2=\varepsilon$. Using the latter equation we obtain
\begin{align*}
cd +\varepsilon = \sum_{j,k=1}^r (c_jd_k+\delta)(u_jv_k+\delta)+ \delta\sum_{j,k=1}^r (\lambda- c_jd_k) +\delta\sum_{j,k=1}^r (\lambda- u_jv_k).
\end{align*}
By construction, the first sum is an element of $\cone(C_{n+1},D_n)=D_{n+1}$, the second is in $C_{n+1}$, and the third is in $D_n$. Therefore, 
$cd+\varepsilon \in D_{n+1}$, which completes the induction proof. The case $n+1=m$ gives the assertion.
\end{proof}
\begin{lem}\label{product2}
If $c,d  \in \cone(C_1,\dots,C_m)^\dagger$ then $cd\in \cone(C_1,\dots,C_m)^\dagger$.
\end{lem}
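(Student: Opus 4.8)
The plan is to deduce the statement from Lemma~\ref{product1} together with the Archimedeanicity from Lemma~\ref{archi}, with a short limiting argument to absorb the cross terms. Write $D:=\cone(C_1,\dots,C_m)$; by Lemma~\ref{archi} (the case $n=m$) $D$ is an Archimedean cone of $A=\alg(A_1,\dots,A_m)$, and $D^\dagger=\{a\in A:\ a+\epsilon\in D\ \text{for all}\ \epsilon>0\}$. I fix $c,d\in D^\dagger$ and an arbitrary $\varepsilon>0$, and aim to establish the single membership $cd+\varepsilon\in D$; since $\varepsilon$ is then arbitrary, this gives $cd\in D^\dagger$.

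First I would use that, by definition of $D^\dagger$, $c+\delta\in D$ and $d+\delta\in D$ for every $\delta>0$. Applying Lemma~\ref{product1} to these two elements of $D$ gives $(c+\delta)(d+\delta)+\epsilon'\in D$ for all $\delta,\epsilon'>0$, and since $(c+\delta)(d+\delta)=cd+\delta\,(c+d+\delta)$ this reads
\[
cd+\delta\,(c+d+\delta)+\epsilon'\in D \qquad\text{for all}\ \delta,\epsilon'>0 .
\]
Next I would absorb the term $\delta\,(c+d+\delta)$ by using that $D$ is Archimedean: choose $\mu>0$ with $\mu-(c+d)\in D$, so that $\delta\bigl(\mu-(c+d)\bigr)\in D$; adding this and using the cancellation $\delta(c+d+\delta)+\delta\bigl(\mu-(c+d)\bigr)=\delta(\mu+\delta)\cdot 1$ yields
\[
cd+\delta(\mu+\delta)+\epsilon'\in D \qquad\text{for all}\ \delta,\epsilon'>0 .
\]
Finally, given $\varepsilon>0$, I pick $\delta>0$ with $\delta(\mu+\delta)<\varepsilon/2$ and set $\epsilon'=\varepsilon/2$, so that $\varepsilon-\delta(\mu+\delta)-\epsilon'>0$ and hence
\[
cd+\varepsilon=\bigl(cd+\delta(\mu+\delta)+\epsilon'\bigr)+\bigl(\varepsilon-\delta(\mu+\delta)-\epsilon'\bigr)\cdot 1\in D ,
\]
because $D$ is a unital cone. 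As $\varepsilon>0$ was arbitrary, $cd\in D^\dagger$.

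There is no serious obstacle here: the computation is bookkeeping once Lemmas~\ref{archi} and~\ref{product1} are in hand. The only subtlety to watch is that $c$ and $d$ need not lie in $D$ itself, only in $D^\dagger$, which is why one works throughout with the shifted elements $c+\delta,d+\delta\in D$ and defers the clean-up of the additive error terms to the very end. If one prefers a symmetric formulation, the same computation can be run using $\mu\pm(c+d)\in D$, but only the one-sided bound is actually needed.
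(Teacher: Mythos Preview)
Your proof is correct and follows essentially the same route as the paper: shift $c,d$ by $\delta>0$ to land in $D=\cone(C_1,\dots,C_m)$, apply Lemma~\ref{product1} to $(c+\delta)(d+\delta)$, and use the Archimedean bound from Lemma~\ref{archi} to absorb the cross terms $\delta c+\delta d$. The only cosmetic difference is that the paper chooses $\delta=-\lambda+\sqrt{\varepsilon/2+\lambda^2}$ (with $\lambda-c,\lambda-d\in D$) so that the identity $cd+\varepsilon=[(c+\delta)(d+\delta)+\varepsilon/2]+\delta(\lambda-c)+\delta(\lambda-d)$ comes out exactly, while you take $\delta$ small enough and add the leftover positive constant at the end.
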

\begin{proof}
We mimick the reasoning of the proof of the preceding lemma. By Lemma \ref{archi},  $\cone(C_1,\dots,C_m)$ is Archimedean in $A$. Hence there exists $\lambda >0$ 
such that $(\lambda -c)\in \cone(C_1,\dots,C_m)$ and  $(\lambda -d)\in \cone(C_1,\dots,C_m)$. Let $\varepsilon >0$ and put $\delta:=-\lambda +\sqrt{\varepsilon/2+\lambda^2}$. 
Then  $\varepsilon=\delta^2+2\delta \lambda +\varepsilon/2$. Since $c,d  \in \cone(C_1,\dots,C_m)^\dagger$, we have $(c+\delta), (d+\delta)\in \cone(C_1,\dots,C_m)$. Therefore,  it follows from   Lemma \ref{product1} that
 $ [(c+\delta) (d+\delta)+\frac{1}{2}\varepsilon/2 ]\in \cone(C_1,\dots,C_m)$. Using the preceding facts and
\begin{align}
cd+\varepsilon=[(c+\delta)(d+\delta)+\varepsilon/2]+ \delta (\lambda- c) +\delta (\lambda- d).
\end{align}
we conclude that $cd+\varepsilon\in \cone(C_1,\dots,C_m)$ for all $\varepsilon>0$. This means that $cd\in \cone(C_1,\dots,C_m)$.
\end{proof}

\begin{lem}\label{squares} For each $a\in A$, we have $a^2\in \cone(A_1,\dots,A_m)^\dagger$.
\end{lem}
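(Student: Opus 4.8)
Reading the displayed cone as $\cone(C_1,\dots,C_m)^\dagger$ (taking it literally as $\cone(A_1,\dots,A_m)^\dagger$ would make it all of $A$ and the statement vacuous), write $D:=\cone(C_1,\dots,C_m)$. The plan is to reduce everything to the Positivstellensatz for modules of Archimedean semirings, Proposition \ref{possemiring}, once $D^\dagger$ has been recognized as an Archimedean semiring of $A$. To see the latter: $D^\dagger$ is a unital cone because $1\in D\subseteq D^\dagger$; the inclusion $D^\dagger\cdot D^\dagger\subseteq D^\dagger$ is exactly Lemma \ref{product2}, so $D^\dagger$ is a semiring; and by Lemma \ref{archi} applied with $n=m$ the cone $D$ is Archimedean in $A=\alg(A_1,\dots,A_m)$, hence $A=A_{\rm bd}(D)\subseteq A_{\rm bd}(D^\dagger)$, so $D^\dagger$ is Archimedean as well.

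Next I would apply Proposition \ref{possemiring} with $C=S=D^\dagger$, noting that $D^\dagger$ is trivially a module over the semiring $D^\dagger$. Fix $a\in A$ and $\varepsilon>0$. For every $\chi\in\ch(A;D^\dagger)$ we have $\chi(a^2+\varepsilon)=\chi(a)^2+\varepsilon\geq\varepsilon>0$, since $\chi$ is a multiplicative linear map with $\chi(\varepsilon\cdot 1)=\varepsilon$. Proposition \ref{possemiring} therefore yields $a^2+\varepsilon\in D^\dagger$ for every $\varepsilon>0$. To conclude, let $\delta>0$ be arbitrary; applying the previous step with $\varepsilon=\delta/2$ gives $a^2+\delta/2\in D^\dagger$, and then $a^2+\delta=(a^2+\delta/2)+\delta/2\in D$ by the definition of $\dagger$. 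Since $\delta>0$ was arbitrary, $a^2\in D^\dagger$, as required.

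I do not expect a genuine obstacle: the whole content is the reduction, i.e. noticing that Lemma \ref{product2} together with Lemma \ref{archi} upgrade $D^\dagger$ to an Archimedean semiring, so that Krivine's theorem (Proposition \ref{possemiring}) becomes available for the manifestly character-positive elements $a^2+\varepsilon$. The only points that need a little care are checking that the hypotheses of Proposition \ref{possemiring} are literally met — that $D^\dagger$ is a semiring, not merely closed under pairwise products, which is immediate from $1\in D^\dagger$ — and the routine $\dagger$-bookkeeping in the last step passing from "$a^2+\varepsilon\in D^\dagger$ for all $\varepsilon>0$" to "$a^2\in D^\dagger$".
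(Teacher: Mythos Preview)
Your argument is correct and there is no circularity: Lemma~\ref{product2} is proved before Lemma~\ref{squares} and its proof (via Lemmas~\ref{archi} and~\ref{product1} and Proposition~\ref{preordlemma}) does not rely on Lemma~\ref{squares}. Once $D^\dagger$ is an Archimedean semiring, Proposition~\ref{possemiring} applied to $a^2+\varepsilon$ gives $a^2+\varepsilon\in D^\dagger$, and your $(D^\dagger)^\dagger=D^\dagger$ step finishes it.

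This is, however, a genuinely different route from the paper's. The paper does \emph{not} invoke Lemma~\ref{product2} here. Instead it writes $a=\sum_{k=1}^n a_{1k}\cdots a_{mk}$ with $a_{ik}\in A_i$, passes to the free polynomial algebra $B=\dR[x_{11},\dots,x_{mn}]$, and builds a concrete Archimedean semiring $S\subseteq B$ out of the factors $\lambda\pm x_{jk}$ (for the semiring indices) and the quadratic-module blocks $(\lambda-\sum_k x_{jk}^2)q_j+p_j$ (for the quadratic-module indices). Krivine's theorem is then applied in $B$ to the polynomial $p_\varepsilon=(\sum_k x_{1k}\cdots x_{mk})^2+\varepsilon$, and the substitution $x_{jk}\mapsto a_{jk}$ pushes the resulting $S$-representation into $\cone(C_1,\dots,C_m)$. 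Your approach is shorter and conceptually cleaner: it recognizes that the hard work was already done in Lemma~\ref{product2}, so no auxiliary algebra is needed. The paper's approach, by contrast, is independent of Lemma~\ref{product2} and yields an explicit description of which generator-products are used, distinguishing the semiring factors from the quadratic-module factors; that explicit form is closer in spirit to the examples in Section~\ref{examples}.
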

\begin{proof}
Since $A$ is generated the subalgebras  $A_1,\dots,A_m$ , $a\in A$ is of the form 
\begin{align}\label{aform}
a=\sum_{k=1}^n a_{1k}\cdots a_{mk}\, ,~~\textrm{ where}~~ a_{ik}\in A_i,\, i=1,\dots,m, k=1,\dots, n. n\in \dN.
\end{align}
Without loss of generality we assume that $C_i$ is an $S_i$-module of a semiring $S_i$ for $i=1,\dots,r$ and $C_{r+1},\dots,C_m$ are quadratic modules.
 Since each  $C_l$  is an Archimedean cone of $A_l$ by assumption,  there exists a positive number $\lambda$ such that
\begin{align}\label{ajbjsq}
(\lambda-a_{jk}), (\lambda+a_{jk})\in C_j~~\textrm{and} ~~~(\lambda-a_{i1}^2-\cdots- a_{in}^2)\in C_i
\end{align} 
for $j=1,\dots,r$, $i=r+1,\dots,m$, and $k=1,\dots,n$.  

Now we consider the polynomial algebra $B:=\dR[x_{11},x_{12},\dots,x_{mn}]$ and the unital cone $S$ in $B$ obtained by the non-negative combination of polynomials
\begin{align*}
(\lambda-x_{11})^{k_{11}}(&\lambda+x_{11})^{l_{11}}\cdots (\lambda-x_{1n})^{k_{1n}}(\lambda+x_{1n})^{l_{1n}}\times \cdots \times\\ (\lambda-x_{r1})^{ k_{r1}}&(\lambda+x_{r1})^{l_{r1}}\cdots (\lambda-x_{rn})^{k_{rn}}(\lambda+x_{rn})^{l_{rn}}\times \\ 
(\lambda-x_{r+1,1}^2-\cdots-x_{r+1,n}^2) &q_{r+1}(x_{r+1,1},\dots,x_{r+1,n}) + p_{r+1}(x_{r+1,1},\dots,x_{r+1,n})\times \cdots\times\\
(\lambda-x_{m1}^2-\cdots&-x_{mn}^2) q_{m}(x_{m1},\dots,x_{mn}) + p_{m}(x_{m1},\dots,x_{mn}),
\end{align*}
where $k_{11},\dots,k_{rn},l_{11},\dots,l_{rn}\in \dN_0$ and 
\begin{align}\label{ssos}
q_j,p_j\in \sum \dR[x_{j1},\dots,x_{jn}]^2~~\textrm{ for}~~  j=r+1,\dots,m.
\end{align}
Since the product of such polynomials is  of the same form, $S$ is a semiring in $B$. We  verify that $AS$ is Archimedean in $B$. By definition,  $\lambda \pm x_{ji}$ for $j=1,\dots,r,$  $i=1,\dots,n$ belongs to $S$. Now we turn to $x_{ji}$, with $j\in \{r+1,\dots,m\}, i\in \{1,\dots,n\}$. Since  $ \lambda-x_{j1}^2-\cdots-x_{jn}^2$  is in $S$ by definition, it follows from the identity
\begin{align*}
\sqrt{\lambda}\pm x_{ji}=\frac{1}{2\sqrt{\lambda}}\Big[(\lambda- x_{j1}^2-\cdots- x_{jn}^2) +\sum_{l=1,l\neq i}^n x_{jl}^2 +(\sqrt{\lambda}\pm x_{ji})^2\Big]
\end{align*}
that $\sqrt{\lambda}\pm x_{ji}$ is in $S$. Therefore,  $S$ is an Archimedean semiring of $B=\dR[x_{11},\dots,x_{mn}]$.

Let $\varepsilon >0$ and consider the polynomial 
$$p_\varepsilon(x_{11},\dots,x_{mn}):=\Big(\sum_{k=1}^n x_{1k}\cdots x_{mk}\Big)^2+ \varepsilon$$ 
of $\dR[x_{11},\dots,x_{mn}]$. Obviously, $p_\varepsilon>0$ on  $\dR^{mn}$. In particular, $p_\varepsilon>0$ on the semi-algebraic set of $\dR^{mn}$ defined by the semiring $S$. 
Therefore, by the Archimedean positivstellensatz for semirings (Proposition \ref{possemiring} for $C=S$), $ p_\varepsilon$ belongs to $S$.

Setting $x_{ji}=a_{ji}$  in $ p_\varepsilon(x_{11},\dots,x_{mn})$ and using  (\ref{aform}) we conclude  that  $a^2+\varepsilon$ is a non-negative combination of elements
\begin{align*}
(\lambda-a_{11})^{k_{11}}(&\lambda+a_{11})^{l_{11}}\cdots (\lambda-a_{1n})^{k_{1n}}(\lambda+a_{1n})^{l_{1n}}\times \cdots \times\\ (\lambda-a_{r1})^{ k_{r1}}&(\lambda+a_{r1})^{l_{r1}}\cdots (\lambda-a_{rn})^{k_{rn}}(\lambda+a_{rn})^{l_{rn}}\times \\
(\lambda-a_{r+1,1}^2-\cdots-a_{r+1,n}^2) &q_{r+1}(a_{r+1,1},\dots,a_{r+1,n}) + p_{r+1}(a_{r+1,1},\dots,a_{r+1,n})\times \cdots\times\\ 
(\lambda-a_{m1}^2-\cdots &-a_{mn}^2) q_{m}(a_{m1},\dots,a_{mn}) + p_m(a_{m1},\dots,a_{mn}),
\end{align*}
 For $i=1,\dots,r$ we have $( \lambda \pm a_{ij})\in C_i$ by (\ref{ajbjsq}) and $C_i$ is a semiring by assumption. Hence, the product of all terms in first two lines  belongs to $C_1\cdots C_r$.
From (\ref{ssos}) it follows that  $ q_j(a_{j,1},\dots,a_{jn})$ and $ p_j(a_{j,1},\dots,a_{jn})$  are in $\sum (A_j)^2.$ for $j=r+1,\dots,m$. Since $C_j$  is a quadratic module of $A_j$ for $j=r+1,\dots,m$  we conclude that  the product of terms in the last two lines is in $C_{r+1}\cdots C_m$. Hence the whole product  belongs to $\cone(C_1,\dots,C_m)$. Therefore, $a^2+\varepsilon\in \cone(C_1,\dots,C_m)$ for all $\varepsilon>0$. That means that $a^2 \in \cone(C_1,\dots,S_m)^\dagger$.
\end{proof}\begin{lem}
 \begin{align}\label{charac1}
\ch(A;\cone(M,\cone(C_1,\dots,C_m)^\dagger))&=\ch(A;\cone(M,C_1,\dots,C_m))\\ &= \ch(A;M)\cap_{i=1}^m \ch(A;C_i).\label{charac2}
\end{align}
\end{lem}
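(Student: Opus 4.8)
The plan is to derive both equalities in (\ref{charac1})--(\ref{charac2}) from two entirely formal observations about characters and unital cones; none of the substantial results established above (that $\dagger$-cones are preorderings, Archimedeanness, Krivine's theorem) enters here, only the definitions of $\cone(\cdot)$ and of $C^\dagger$ together with the fact that a character is a unital algebra homomorphism of $A$ into $\dR$.

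First I would record the \emph{$\dagger$-invisibility principle}: for any unital cone $C$ of a subalgebra of $A$ and any $\chi\in\ch(A)$, one has $\chi\in\ch(A;C)$ if and only if $\chi\in\ch(A;C^\dagger)$. The implication ``$\Leftarrow$'' is immediate from $C\subseteq C^\dagger$; for ``$\Rightarrow$'', if $a\in C^\dagger$ then $a+\varepsilon\in C$ for every $\varepsilon>0$, so $\chi(a)=\chi(a+\varepsilon)-\varepsilon\geq-\varepsilon$ for all $\varepsilon>0$, hence $\chi(a)\geq 0$.

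Second I would record the \emph{factorization principle}: if $N_1,\dots,N_k$ are unital cones of subalgebras of $A$, then $\chi\in\ch(A;\cone(N_1,\dots,N_k))$ if and only if $\chi(n)\geq 0$ for every $n\in N_j$ and every $j$. For ``$\Leftarrow$'' one uses that a typical element of $\cone(N_1,\dots,N_k)$ has the form $\sum_l n_{1l}\cdots n_{kl}$ with $n_{jl}\in N_j$, so its image under $\chi$ equals $\sum_l\chi(n_{1l})\cdots\chi(n_{kl})$, a sum of products of nonnegative numbers; for ``$\Rightarrow$'' one uses that $N_j\subseteq\cone(N_1,\dots,N_k)$, since $1\in N_i$ for every $i\neq j$.

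Given these two principles the lemma is a two-line deduction. The factorization principle applied to the tuple $(M,C_1,\dots,C_m)$ yields (\ref{charac2}). Then, writing $D:=\cone(C_1,\dots,C_m)$, the factorization principle applied to $(M,D^\dagger)$ gives $\ch(A;\cone(M,D^\dagger))=\ch(A;M)\cap\ch(A;D^\dagger)$; the $\dagger$-invisibility principle rewrites $\ch(A;D^\dagger)$ as $\ch(A;D)$, and the factorization principle applied to $(C_1,\dots,C_m)$ rewrites $\ch(A;D)$ as $\bigcap_{i=1}^m\ch(A;C_i)$, so $\ch(A;\cone(M,D^\dagger))=\ch(A;M)\cap\bigcap_{i=1}^m\ch(A;C_i)$, which by (\ref{charac2}) equals $\ch(A;\cone(M,C_1,\dots,C_m))$; this is (\ref{charac1}). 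I do not anticipate a genuine obstacle here; the only point deserving a moment's care is the ``$\Rightarrow$'' direction of the factorization principle, i.e.\ the inclusion $N_j\subseteq\cone(N_1,\dots,N_k)$, which rests on every cone in the tuple being unital --- and this is exactly what the standing hypotheses supply ($M$ is a unital cone, each $C_i$ is a unital cone, and $D^\dagger$ is unital because $D$ is).
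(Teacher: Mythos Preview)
Your proposal is correct and proceeds along essentially the same lines as the paper's own proof: the paper's argument for (\ref{charac2}) is exactly your factorization principle applied to $(M,C_1,\dots,C_m)$, and its argument for (\ref{charac1}) is the same $\varepsilon\to 0$ computation you isolate as the $\dagger$-invisibility principle, only carried out directly on elements $\sum_i a_ic_i$ of $\cone(M,D^\dagger)$ rather than stated abstractly. Your modular packaging into two standalone principles is arguably cleaner, but the underlying ideas are identical.
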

 \begin{proof}
Since $\cone(M,C_1,\dots,C_m)\subseteq \cone(M,\cone(C_1,\dots,C_m)^\dagger)$,  it is obvious that $\ch(A;\cone(M,\cone(C_1,\dots,C_m)^\dagger))\subseteq\ch(A;\cone(M,C_1,\dots,C_m)).$ 
Conversely, let $\chi \in\ch(A;\cone(M,C_1,\dots,C_m))$ and $c\in\cone(M,\cone(C_1,\dots,C_m)^\dagger).$ Then, $c=\sum_{i=1}^r a_ic_i$, with $a_i\in M$ and $c_i\in \cone(C_1,\dots,C_m)^\dagger$. For any $\varepsilon >0$, we have $a_i(c_i+\varepsilon) \in\cone(M,C_1,\dots,C_m)$, so 
\begin{align*}
\chi(c)-\varepsilon \sum_{i=1}^r \chi(a_i)=\sum_{i=1}^r \chi(a_i(c_i+\varepsilon)) \geq 0.
\end{align*}
 Letting $\varepsilon \to +0$ yields $\chi(c)\geq 0$, so  $\chi\in\ch(A;\cone(M,\cone(C_1,\dots,C_m)^\dagger)).$ This proves the  equality (\ref{charac1}).

A character $\chi$ of $A$ is in $\ch(A;\cone(M,C_1,\dots,C_m))$ if and only if $\chi(a c_1\cdots c_m)\newline =\chi(a)\chi(c_1)\cdots \chi(c_m)\geq 0$ for all $a\in M$, $c_1\in C_1,\dots, c_m\in C_m$, or equivalently, $\chi\in \ch(A;M)$ and $\chi \in \ch(A;C_i)$ for all $i=1,\dots,m$. This proves (\ref{charac2}).
\end{proof}

Now we are ready to prove the two theorems.\smallskip

{\bf Proof of Theorem \ref{daggerpre}:}

From Lemma \ref{product2} and \ref{squares}, $\cone(C_1,\dots,C_m)^\dagger$ is invariant under multiplication and contains all squares. Hence  it is a preordering. By Lemma \ref{archi}, it is Archimedean.

Since $\cone(C_1,\dots,C_m)^\dagger$ is invariant under multiplication, it is obvious that $\cone(M,\cone(C_1,\dots,C_m)^\dagger)$ is a $\cone(C_1,\dots,C_m)^\dagger$-module. \smallskip

{\bf Proof of Theorem \ref{positivst}:}

(i)$\to$(ii): Since $\cone(C_1,\dots,C_m)$ is  Archimedean  by Lemma \ref{archi}, so is the larger $\cone(M,\cone(C_1,\dots,C_m)^\dagger)$. Hence $ \ch(A;\cone(M,\cone(C_1,\dots,C_m)^\dagger))$ is compact in the weak topology.  Hence the positive continuous function $a\to \chi(a)$ on the compact space  $ \ch(A;\cone(M,\cone(C_1,\dots,C_m)^\dagger))=\ch(A;M)\cap_{i=1}^m \ch(A;C_i)$ (by (\ref{charac1}) and  (\ref{charac2}))  has a positive infimum, say $ \alpha$. Hence $a-\alpha/2$ satisfies condition (i) as well.  From Theorem \ref{daggerpre} we know that   $\cone(C_1,\dots,C_m)^\dagger$ is an Archimedean preordering and $\cone(M,\cone(C_1,\dots,C_m)^\dagger)$ is a 
$\cone(C_1,\dots,C_m)^\dagger$-module. Therefore, by the Positivstellensatz for modules of  Archimedean   semirings  (Proposition \ref{possemiring}), (i) implies that $a-\alpha/2$ belongs to 
$\cone(M,\cone(C_1,\dots,C_m)^\dagger)$,   that is, we have $a-\alpha/2  =\sum_{i=1}^s a_ic_i$, with $a_i\in M$ and $c_i\in   \cone(C_1,\dots,C_m)^\dagger$ for $i=1,\dots,s$. Using once more that $\cone(C_1,\dots,C_m)$ is Archimedean (Lemma \ref{archi})  there exists  $\lambda >0$ such that $(\lambda-\sum_{i=1}^s a_i) \in \cone(C_1,\dots,C_m)\subseteq \cone(M,C_1,\dots,C_m)$. Set   $\varepsilon:= \alpha/4$  and  $\delta:=\alpha(4 \lambda)^{-1}$. Then $\alpha/2-\delta\lambda=\varepsilon$. Since 
$c_i+\delta \in \cone(C_1,\dots,C_m)$, we therefore obtain 
\begin{align*}
a=\alpha/2-\delta\lambda + \sum_{i=1}^s a_i( c_i+\delta)  + \delta \Big(\lambda- \sum_{i=1}^s a_i\Big) \in \varepsilon +\cone(M,C_1,\dots,C_m).
\end{align*}

(ii)$\to$(i): Suppose that (ii) holds, say $a=\varepsilon +c$ with $c\in\cone(M,C_1,\dots,C_m) $. Then $\chi(a)=\varepsilon +\chi(c)\geq \varepsilon >0$ for all $\chi\in\ch(A;\cone(M,C_1,\dots,C_m))=\ch(A;M) \cap_{i=1}^m \ch(A;C_i)$ by (\ref{charac2}). This proves (i).

\section{Examples}\label{examples}

Throughout this section, we abbreviate $\dR_{n}[\underline{x}]:=\dR[x_1,x_2,\dots,x_n]$.  The characters of $A:=\dR_n[\underline{x}]$ are precisely the point evaluations at points of $\dR^n$.  For notational simplicity we identify a point evaluation with the corresponding point of $\dR^n$. Thus, if $C_i$ is a semiring or a quadratic module of $A_i$ generated by polynomials $f_1,\dots, f_r\in \dR_n[\underline{x}]$,  it is easily seen that  $\ch(A;C_i)$ is the {\it semi-algebraic set}
\begin{align}
K(f_1,\dots,f_r):= \{ t\in \dR^n: f_1(t)\geq 0,\dots,f_r(t)\geq 0\}.
\end{align}

In this section  we develop a number of  examples. Our particular aim  is to obtain representations of positive polynomials based on smaller sets of polynomials. For instance, instead of  squares of arbitrary elements of $A$ in many examples only squares of elements of certain subalgebras  occur. 

 All assertions  in the examples follow from Theorem \ref{positivst} or  from Corollary \ref{casem1} or   Proposition \ref{possemiring}. In most cases we leave the details of verifications to the reader.

We begin with some simple examples around cubes $[-1,1]^n$ for $n\in \dN$. 
\begin{exm} Multi-dimensional Bernstein theorem on $[-1,1]^n$\\
Recall that {\it Bernstein's theorem} says that each polynomial $q\in \dR[x]$ which is positive on the interval $[-1,1]$ is of the form
$$
\sum_{k,l=0}^r \alpha_{kl} (1-x)^k(1+x)^l, ~~\textrm{where}~~ \alpha_{kl}\geq 0,  k,l=1,\dots,r,~~\textrm {and} ~ r\in \dN_0
$$

Now let $n\in \dN$ and set $A=\dR_n[\underline{x}]$. If $p\in \dR_n[\underline{x}]$ and $p(x)>0$ for all $x\in [-1,1]^n$, then $p$ is a sum of terms
\begin{align}\label{bernstein}
\alpha (1-x_1)^{k_1}(1+x_1)^{l_1}\cdots(1-x_n)^{k_n}(1+x_n)^{l_n},
\end{align}
where $\alpha\geq 0$ and  $k_1,\dots,k_n,l_1,\dots,l_n\in \dN_0$.

This is an immediate consequence of  Theorem \ref{positivst}, applied with $A_i=\dR[x_i]$ and the semiring $C_i$ of $A_i$ generated by $1+x_i, 1-x_i$.  It follows also directly  the Archimedean Positivstellensatz for semirings (Proposition \ref{possemiring}), applied to the Archimedean semiring defined the sum of terms (\ref{bernstein}). 
\end{exm}
\begin{exm}\label{posrectangle1}  Multi-dimensional Markoff-Lukacs theorem on $[-1,1]^n$ \\
First let us state the one-dimensional  Markoff-Lukacs theorem. A polynomial $q\in \dR[x]$ is non-negative on $[-1,1]$ if and only if $q$ belongs to
\begin{align}\label{markoff1}
C:=\{ (1-x^2)q_1(x)^2+p(x)^2\, : q_1, p_1\in \dR[x]\}.
\end{align}
(It should be emphasized that we have single squares in (\ref{markoff1}) rather than sums of squares. Further, this result implies at once that $C$ is a preordering in $\dR[x]$.)

Now we turn an $n$-dimensional version. 
 If $p\in \dR_n[\underline{x}]$ satisfies $p(x)>0$ for all $x\in [-1,1]^n$, then $p$ is a sum  of terms 
\begin{align*}
\alpha  [(1-x_1^2)q_1(x_1)^2&+p_1(x_1)^2]\cdots [(1-x_n^2)q_n(x_n)^2+p_n(x_n)^2],
\end{align*}
where $\alpha\geq 0$  and $q_{1},p_{1},\dots,q_n,p_n\in \dR[x].$ 

This follows  from Theorem \ref{positivst}, applied with $A_j=\dR[x_j]$ and 
\begin{align}\label{markoff}
C_j:=\{ (1-x_{j}^2)q_{j}(x_{j})^2+p_{j}(x_j)^2\, : q_j,p_j\in \dR[x_j]\}.
\end{align}
\end{exm}

\begin{exm}\label{posrectangle2}  Combining Bernstein and Markoff-Lukacs theorems on $[-1,1]^{r+s}$ \\
Let $R$ be the cube $[-1,1]^{r+s}$ in $\dR^{s+k}$, where $r,s\in \dN$ . If $p\in \dR_{r+s}[\underline{x}]$ satisfies $p(x_1,\dots,x_{r+s})>0$ for all $(x_1,\dots,x_{r+s})\in R$, then $p$ is a sum  of terms 
\begin{align*}
\alpha (1&-x_1)^{k_1}(1+x_1)^{l_1}\cdots(1-x_r)^{k_r}(1+x_r)^{l_r}\times \\ [(1-x_{r+1}^2)q_{1}(x_{r+1})^2&+p_{1}(x_{r+1})^2]\cdots [(1-x_{r+s}^2)q_s(x_{r+s})^2+p_{s}(x_{r+s})^2],
\end{align*}
where $\alpha\geq 0$, $k_1,\dots,k_r,l_1,\dots,l_r\in \dN_0$ and $q_{1},p_{1},\dots,q_{s},p_{s}\in \dR[x].$ 

This follows  from Theorem \ref{positivst}, applied with $A_i=\dR[x_i]$. For $i=1,\dots,r,$ we let $C_i$
be  the Archimedean semiring of $A_i$ generated by  $1-x_i$,  $1+x_i$ and for $j=r+1,\dots,r+s$,  $C_j$ is the Archimedean preordering $C_j$ in $A_j$ defined by (\ref{markoff}).

This result can be also obtained  from Proposition \ref{possemiring}, applied to the Archimedean semiring of $\dR_{r+s}[\underline{x}]$ generated by the semirings $C_i$, $i=1,\dots,r+s$.
\end{exm}
In the following two examples we use the unit ball $$B^s:=\{(x_1,\dots,x_s)\in \dR^s:  x_1^2+\cdots+x_s^2\leq 1 \}.$$
\begin{exm}\label{recball}  $[-1,1]^r\times B^s$\\
Let $r,s\in \dN$ and  $A:= \dR_{r+s}[\underline{x}]$. We set $R=[-1,1]^r\times B_s$, that is,
\begin{align*}
R:=\{(x_1,\dots,x_{r+s})\in \dR^{r+s}: -1\leq x_1\leq 1,\dots, -1\leq x_r\leq 1, x_{r+1}^2+\cdots+x_{r+s}^2\leq 1 \}.
\end{align*}
 If $p(x)>0$ for all $x\in R$,  then $p$ is a nonnegative combination of terms 
\begin{align}
(1-x_1)^{k_1}(1+x_1)^{l_1}\cdots &(1-x_r)^{k_r}(1+x_r)^{l_r}\times\nonumber \\
[(1-x_{r+1}^2-\cdots-x_{r+s}^2)&q(x_{r+1},\dots,x_{r+s})+ p(x_{r+1},\dots,x_{r+s})],\label{bs1}
\end{align}
where $k_1,\dots,k_r,l_1,\dots,l_r\in \dN_0$ and $q,p\in \sum \dR[x_{r+1},\dots,x_{r+s}]^2.$

 \end{exm}

\begin{exm}\label{ball} $B^s$ revisited\\
The cone spanned by the  terms in (\ref{bs1}) is an Archimedean quadratic module of  $\dR[x_{r+1},\dots,x_{r+s}]$ which contains all strictly positive polynomials on the corresponding  unit ball.  In this example we recall   \cite[Example 4.1]{kss}  which gives an Archimedean semiring for unit ball in $B_s$. 

Let $S$ denote the semiring of $A=\dR[x_1,\dots,x_s]$ generating by the  polynomials
  \begin{align}
    f(x) := 1-x_1^2-\cdots-x_s^2,~
    g_{j,\pm}(x) := (1\pm x_j)^2,\;
    j=1,\dots,s.
  \end{align}
  Clearly,  $\ch(A;S)$ corresponds to the unit ball
  \begin{equation*}
B^s\equiv K(f)=\{x\in \dR^d: x_1^2+\cdots+x_s^2\leq 1\}.
  \end{equation*}
As noted in \cite{kss},  $S$ is Archimedean. Therefore, by Proposition \ref{possemiring} , if  $p\in A$ and  $p(x)>0$ for all $x\in B^s$, then $p$ is a non-negative combination of terms 
  \begin{align}\label{bs2}
 (1-x_1^2-\cdots-x_d^2)^n(1-x_1)^{2k_1}(1+x_1)^{2\ell_1}\cdots (1-x_d)^{2k_d}(1+x_d)^{2\ell_d},
 \end{align} 
 with\, $n, k_1,\ell_1,\dots,k_d,\ell_s\in \dN_0$. 

 Inserting  (\ref{bs2}) (with $x_1,\dots,x_s$ replaced by $x_{r+1},\dots,x_{r+s}$) instead of (\ref{bs1}) we obtain another description of strictly positive polynomials on $[-1,1]^r\times B^s$. 

For later use it is convenient to have this result also for the  ball
$$
B^s_\rho(x_0):=\{\{(x_1,\dots,x_s)\in \dR^s:  (x_1-x_{01})^2+\cdots+(x_s-x_{0s})^2\leq \rho^2 \}
$$
with center $x_0=(x_{01},\dots,x_{0s})\in \dR^s$ and radius $\rho>0$. This case is obtained from the unit ball $B^s$ be a linear transformation $x_i\to \rho^{-1}(x_i-x_{0i})$. Thus,  if  $p\in A$ satisfies  $p(x)>0$ for all $x\in B^s_\rho(x_0)$, then $p$ is a non-negative combination of polynomials
 \begin{align*}
\big (\rho^2-(x_1-x_{01})^2-\cdots-&(x_s-x_{0s})^2\big)^n \big(\rho-(x_1-x_{01})\big)^{2k_1}\big(\rho+(x_1-x_{01})\big)^{2\ell_1}\times \\ &\cdots \times \big(\rho-(x_s-x_{0s})\big)^{2k_s}\big(\rho+(x_s-x_{0s})\big)^{2\ell_s},
 \end{align*} with\, $n, k_1,\ell_1,\dots,k_d,\ell_s\in \dN_0$. 

  \end{exm}
\begin{exm} {\it Part of the interior of the elliptic paraboloid $x_3\geq x_1^2+x_2^2$, $x_3\leq 1$}\\
Let $C$ denote the semiring of $A:=\dR[x_1,x_2,x_3]$ generated by $x_3-x_1^2-x_2^2$ and $x_3$, $1-x_3$,  $(1-x_1)^2, (1+x_1)^2, (1-x_2)^2, (1+x_2)^2$. Clearly, $\ch(A;C)$ is the semialgebraic set
$$
R=\{(x_1,x_2,x_3)\in \dR^3: x_3\geq x_1^2+x_2^2,~  x_3\leq 1\, \}.
$$
For $i,j\in \{1,2\}$, $i\neq j$, we have
\begin{align}\label{idex3}
2+ x_3\pm 2 x_j=x_3-x_1^2-x_2^2 +\frac{1}{2}[(1+x_i)^2+(1-x_i)^2]+ (1\pm x_j)^2 \in C.
\end{align}
Since $x_3, 1-x_3 \in C$, $x_3\in A_{\rm bd}(C)$. Hence  $3\pm 2 x_j=(2+x_3 \pm 2 x_j)+ (1-x_3) \in C$ by  (\ref{idex3}), so that $x_j\in A_{\rm bd}(C)$ for $j=1,2$. Therefore, the semiring $C$ of $A$ is Archimedean.

Let $p\in \dR[x_1,x_2,x_3]$. If $q(x_1,x_2,x_3)>0$ for all $(x_1,x_2,x_3)\in R$, it follows from Proposition \ref{possemiring} that $q$ is a non-negative combination of polynomials
\begin{align*}
(x_3-x_1^2-x_2^2)^n (1-x_3)^{k_3} x_3^{l_3}  (1-x_1)^{2k_2} (1+x_1)^{2l_2} (1-x_2)^{2k_1} (1+x_2)^{2l_1},
\end{align*} 
where $n, k_1,l_1,k_2, l_2, k_3, l_3\in \dN_0$. 
\end{exm}
\begin{exm}\label{n3nosemiring}
In this example we set $A:=\dR[x_1,x_2,x_3]$. Further, let $A_j:=\dR[x_j],$  $j=1,2,3,$ and $A_{12}:= \dR[x_1,x_2]$. Clearly,
\begin{align}
C_{12}:=\big\{ (1-x_1^2)p_1+ (1-x_2^2) p_2 +p_3: \, p_1,p_2,p_3\in \sum \dR[x_1,x_2]^2\, \big\}
\end{align}
is an Archimedean quadratic module of $A_{12}$ and 
\begin{align}
C_j:=\big\{ (1-x_j^2)q_1+  q_2: \, q_1,q_2\in \sum \dR[x_j]^2\, \big\}
\end{align}
is an Archimedean preordering of $A_j$.

The subalgebras $A_{12,}, A_3$ and the cones $C_{12}, C_3$ satisfies the assumptions of Theorem \ref{positivst}. The semi-algebraic set $\ch(A;C_{12})\cap \ch(A;C_3)$ in $\dR^3$ is $[-1,1]^3$ and $\cone(C_{12},C_3)$ consists of all sums of terms
\begin{align}\label{termsc12}
[ (1-x_1^2)p_1(x_1,x_2)+ (1-x_2^2) p_2(x_1,x_2) +p_3(x_1,x_2)]\, [(1-x_3^2)q_1(x_3)+  q_2(x_3) ],
\end{align}
where $ p_1,p_2,p_3\in \sum \dR[x_1,x_2]^2$ and $ q_1,q_2\in \sum \dR[x_3]^2$. In this case,  Theorem \ref{positivst} says that if $p\in \dR[x_1,x_2,x_3]$ is positive on $[-1,1]^3$, then $q$ is  a sum of terms (\ref{termsc12}). Note that $C_{12}$ is not a preordering and  $\cone(C_{12},C_3)$ is neither a quadratic module nor a semiring.

The subalgebras $A_1, A_2, A_3$ with cones $C_1, C_2, C_3$ fulfill the assumptions of Theorem \ref{positivst} as well. This is just the case $n=3$ of Example \ref{posrectangle1} and $\cone(C_1,C_2,C_3)$ is a semiring.
\end{exm}
\begin{exm}
We reconsider  \cite[Example 7.1]{kss} in the context of Theorem \ref{positivst}. Let $A=\dR[x_1,x_2]$ and set 
 \begin{equation*}
    f_1(x_1,x_2):= x_2-(x_1-1/2)^2,~~
    f_2(x_1,x_2):= x_2-x_1^2.
  \end{equation*}
For $i=1,2$, let $A_i$ be the subalgebra of $A$ generated by $f_i$ and $C_i$ the semiring of $A_i$ generated by $1-f_i, 1+f_i$. It is clear that $C_i$ is Archimedean in $A_i$ and that $A$ is generated by $A_1$ and $A_2$, so  the assumptions of Theorem \ref{positivst} are fulfilled. Then $\ch(A,\cone(C_1,C_2))$ corresponds to the  semialgebraic set  
  \begin{equation*}
    K:=\{ (x_1,x_2)\in \dR^2: {-1}\leq f_1(x_1,x_2)\leq 1,~ {-1}\leq f_2(x_1,x_2)\leq 1 \}.
  \end{equation*}
Therefore, by Theorem \ref{positivst},  
  each polynomial $p\in \dR[x_1,x_2]$ which is positive in all points of $K$ is a finite sum of terms
  \begin{equation*}
     \alpha (1-f_1(x_1,x_2))^i (1+f_1(x_1,x_2))^j (1-f_2(x_1,x_2)\big)^k (1+f_2(x_1,x_2))^\ell,
  \end{equation*}
  where  $\alpha\geq 0$ and $i,j,k,\ell\in \dN_0$. 
\end{exm}

Now suppose that $f_1,\dots,f_r$ is a finite set polynomials from $A:=\dR[x_1,\dots,x_n]$ such that the semi-algebraic set $K:=K(f_1,\dots,f_r)$ of $\dR^n$ is {\it compact}. 

It is well-known that the compactness of $K(f_1,\dots,f_r)$  does not imply that the  quadratic module $Q$ generated by $f_1,\dots,f_r$ is Archimedean, so  in general the Archimedean Positivstellensatz for quadratic modules does not apply. Similarly, if the semi-algebraic set obtained from a semiring $S$ is compact, then $S$ is not necessarily Archimedean.  In order to apply the corresponding Archimedean Positivstellensatz one has to enlarge $Q$ or $S$, respectively.
As noted above, we want to  obtain representations  for the positive polynomials on $K(f_1,\dots,f_r)$ with "small" classes of squares or polynomials. To achieve this goal this we  propose to proceed as follows. 

We choose subalgebras $A_i$, $i=1,\dots,m$, and Archimedean quadratic modules or semirings $C_i$ of $A_i$ such that $A$ is generated by $A_1,\dots,A_m$ and the semi-algebraic set corresponding to $\cap_{i=1}^m \ch(A;C_i)$ contains $K(f_1,\dots,f_r)$. This is possible, because $K(f_1,\dots,f_r)$ is compact. Let $M$ denote  the unital cone of $A$ defined by
\begin{align}
M:=\{ \lambda_0 f_0+\lambda_1 f_1+\cdots+ \lambda_r f_r:\, \lambda_0\geq 0, \lambda_1\geq 0,\dots,\lambda_r\}, ~~\textrm{where}~ f_0:=1.
\end{align}
Then 
\begin{align}\label{characterK}
\ch(A;M)\cap_{i=1}^m \ch(A;C_i)= \ch(A;M)=K(f_1,\dots,f_r).
\end{align}
Therefore, by Theorem \ref{positivst}, if  $p\in \dR_n[\underline{x}]$ and $p(x)>0$ for all $x\in K(f_1,\dots,f_r)$, then $p\in \cone(M,\cone(C_1,\dots,C_m))$, that is, $p$ is a sum  of polynomials
\begin{align}\label{conem}
 f_j\, \alpha\, c_1\cdots c_m, ~~~ {\textrm where}~~~ \alpha\geq 0,~ c_1\in C_1,\dots,c_m\in C_m~~ \textrm{and}~ ~j=0,\dots ,r.
\end{align}
Note that in (\ref{conem}) there are no mixed products of the polynomials $f_1,\dots,f_r$.

That is, the algebras  $A_i$ and  cones $C_i$ are choosen such that $\cone(C_1,\dots,C_m)$ is {\it Archimedean} and $K(f_1,\dots,f_r)\subseteq\cap_{i=1}^m \ch(A;C_i)$.  The terms $f_j$ in  (\ref{conem}) are only needed to ensure  that the  semi-algebraic set $K(f_1,\dots,f_r)$ is the corresponding set (\ref{characterK})  of characters.

There is a lot of freedom in choosing the cones $C_i$. For instance, one may have
 an $n$-dimensional cube,  rectangle, or  ball as corresponding set $\cap_{i=1}^m \ch(A;C_i)$  and then  apply Examples
 \ref{posrectangle1}, \ref{posrectangle2}, \ref{recball}, or \ref{ball}. We illustrate this by a very simple example, the famous Jacobi-Prestel counter-example \cite[Example 4.6]{jp}. 
\begin{exm} $n=2$, $f_1= x_1-\frac{1}{2}, f_2=x_2-\frac{1}{2}, f_3=1-x_1x_2$.\\
Then $K:=K(f_1,f_2,f_3)$ is a compact set bounded by the hyperbola $1=x_1x_2$ and the two lines $x_1=\frac{1}{2}$,  $x_1=\frac{1}{2}$.  As noted in \cite{jp}, 
the quadratic module of $\dR[x_1,x_2]$ generated by $f_1,f_2,f_3$ is not Archimedean . How to repair this?

 We set $A_1=\dR[x_1], A_2=\dR[x_2]$. and consider two embeddings of  $K(f_1,f_2,f_3)$.

Case 1:  Embed  $K(f_1,f_2,f_3)$ into the square $[1/2,2]\times [1/2,2].$  

Then there are two possibilities. For the first  we take $C_i$, $i=1,2,$ to be  the semiring generated by $2-x_i$ and $x_i-\frac{1}{2}$. Clearly, $C_i$ is an Archimedean semring of $A_i$. Then $\cone(M,\cone(C_1,C_2))$ is the unital cone spanned by $2-x_1, 2-x_2, x_1-\frac{1}{2}, x_2-\frac{1}{2}$ and $f_3$. Therefore, if $p\in\dR[x_1,x_2]$ and $p(x)>0$ for all $x\in K(f_1,f_2,f_3)$, then $p$ is a non-negative combination of terms
\begin{align}
&(2-x_1)^{k_1} ( x_1-1/2)^{l_1} (2-x_2)^{k_2} ( x_2-1/2)^{l_2}~\textrm{and}\\  &f_3 \,(2-x_1)^{k_1} ( x_1-1/2)^{l_1} (2-x_2)^{k_2} ( x_2-1/2)^{l_2}, 
\end{align}
where $k_1,k_2,l_1,l_2\in \dN_0$.

For the second possibility we let $C_i$, $i=1,2$, be the Archimedean preordering 
\begin{align}\label{markoff}
C_i:=\{ (2-x_i)(x_i-1/2)q_{i}(x_i)^2+p_i(x_i)^2\, : q_i,p_i\in \dR[x_i]\}.
\end{align}
in $A_i$. Then,  by Theorem \ref{positivst}, if  $p(x)>0$ for all  $x\in K(f_1,f_2,f_3)$, then $p$ is a non-negative combination of polynomials
\begin{align*}
f_i (x_1,x_2) [(2{-}x_1)(x_1{-}1/2)q_1(x_1)^2+p_1(x_1)^2]\, ((2{-}x_2)(x_2{-}1/2)q_2(x_2)^2+p_2(x_2)^2],
\end{align*}
where $i=0,1,2,3$ and $q_1,p_1,q_2,p_2\in \dR[x]$. Here we have set $f_0=1$.
\smallskip

Case 2: Embed $ K(f_1,f_2,f_3)$ into the dics  $B^2_\rho(x_0)$ with center  $x_0=(5/4,5/4)$ and radius  $\rho=\sqrt{2}\, \frac{ 3}{4}=\sqrt{18}/4$. 

Then,  if  $p\in  \dR[x_1,x_2]$ satisfies  $p(x_1,x_2)>0$ for all $(x_1,x_2)\in K(f_1,f_2,f_3)$,  it follows from (\ref{conem}) and  the formula for  $B^2_\rho(x_0)$ in   Example \ref{ball} that $p$ is a sum of polynomials
 \begin{align*}
f_j\, \alpha \big(9/8-(x_1{-}5/4)^2&-(x_2{-}5/4)^2\big)^n \big(\sqrt{18} /4-(x_1{-}5/4)\big)^{2k_1}\big(\sqrt{18}/4+(x_1{-}5/4)\big)^{2\ell_1}\\ & \times 
\big(\sqrt{18}/4-(x_2{-}5/4)\big)^{2k_2}\big(\sqrt{18}/4+(x_2{-}5/4)\big)^{2\ell_2},
 \end{align*} 
where\, $\alpha\geq 0$, $j=1,2,3$ and $n, k_1,\ell_1,k_2,\ell_2\in \dN_0$. 
\end{exm}
\begin{rem}
In this paper we have tried to express positive polynomials on compact semi-algebraic sets such that small classes of elements or squares are needed.  There are  a number of natural questions on this matter. 
For instance, one may ask whether the following is true: Each polynomial which is positive on the square $[0,1]^2$  is a finite sum of terms
\begin{align}
(1-x_1^2)p_1(x_1)^2q_1(x_2)^2+(1-x_2^2)p_2(x_1)^2q_2(x_2)^2+p_3(x_1)^2q_3(x_2)^2,
\end{align}
where $p_1,p_2,p_3,q_1,q_2,q_3\in \dR[x]$.
\end{rem}

\bibliographystyle{amsalpha}

\end{document}